\renewcommand{\eprint}[1]{#1}
\theoremstyle{plain}
\newtheorem{thm}[]{Theorem}
\newtheorem{prop}[]{Proposition}
\newtheorem{cor}[prop]{Corollary}
\theoremstyle{definition}
\theoremstyle{remark}
\newtheorem{rmk}[prop]{Remark}
\newtheorem{example}[prop]{Example}
\newcommand{\ensemble}[1]{\left\{ #1 \right\}}
\newcommand{\absolute}[1]{\left| #1 \right|}
\newcommand{\pairing}[2]{\left \langle #1, #2 \right\rangle}
\newcommand{\schwfuncs}{\mathcal{S}}
\newcommand{\N}{\mathbb{N}}
\newcommand{\Z}{\mathbb{Z}}
\newcommand{\C}{\mathbb{C}}
\newcommand{\R}{\mathbb{R}}
\newcommand{\cycl}{\mathrm{cyc}}
\newcommand{\triv}{\mathrm{triv}}
\newcommand{\smooth}[1]{\mathcal{#1}}
\newcommand{\opos}{\mathrm{op}}
\newcommand{\Lin}{\mathrm{Hom}_\C}
\newcommand{\U}{\mathrm{U}}
\newcommand{\clsB}{B}
\newcommand{\bBcplx}{\mathcal{B}}
\newcommand{\nbBcplx}{\bar{\bBcplx}}
\newcommand{\nC}{\bar{C}}
\newcommand{\Tot}{\mathrm{Tot}}
\newcommand{\propEx}{\underline{E}}
\DeclareMathOperator{\HP}{\mathrm{HP}}
\DeclareMathOperator{\HC}{\mathrm{HC}}
\DeclareMathOperator{\CC}{\mathrm{CC}}
\DeclareMathOperator{\HH}{\mathrm{HH}}
\DeclareMathOperator{\bop}{\mathbf{b}}
\DeclareMathOperator{\Bop}{\mathbf{B}}
\DeclareMathOperator{\Lop}{\mathcal{L}}
\DeclareMathOperator{\End}{End}
\DeclareMathOperator{\Ext}{Ext}
\DeclareMathOperator{\ch}{ch}
\DeclareMathOperator{\ev}{ev}
\DeclareMathOperator{\Sh}{Sh}
\DeclareMathOperator{\sh}{sh}
\DeclareMathOperator{\ptensor}{\hat{\otimes}}
\def\blfootnote{\gdef\@thefnmark{}\@footnotetext}
\begin{document}
\title{Monodromy of the Gauss--Manin connection for deformation by group cocycles} % short: [Gauss--Manin for cocycle deformation]
\author{Makoto Yamashita}
\address{Department of Mathematics, Ochanomizu University\\
Otsuka 2-1-1, Bunkyo\\
112-8610, Tokyo, Japan}
\email{yamashita.makoto@ocha.ac.jp}
\date{September 21, 2012; February 8, 2017} % Activate to display a given date or no date
\keywords{cyclic homology, group cohomology, deformation quantization}
\subjclass[2010]{Primary 19D55; Secondary 46L65}

\begin{abstract}
We consider the $2$-cocycle deformation of algebras graded by discrete groups.  The action of the Maurer--Cartan form on cyclic cohomology is shown to be cohomologous to the cup product action of the group cocycle.  This allows us to compute the monodromy of the Gauss--Manin connection in the strict deformation setting.%  In the case of twisted group algebras, we obtain a purely algebraic explanation of the `range of trace' formula by Mathai.
\end{abstract}

\maketitle

\section{Introduction}

This paper concerns the cyclic cohomology of strict deformation associated with group cocycles.  This forms a homological algebraic counterpart of our previous paper on the C$^*$-algebraic K-theory of such deformations~\cite{arXiv:1107.2512}.  We consider the algebras which admit grading by discrete groups (also known as Fell bundles), where the deformation parameter is given by $\U(1)$-valued $2$-cocycles on the structure group.

One important example of such a deformation is the noncommutative torus, which is $\Z^2$-graded by the Fourier decomposition.  As is well known, its K-group is isomorphic to $\Z^2$, the same as that of classical $2$-torus.  The deformation parameter appears when one considers the functional on the $K_0$-group induced by the standard trace~\citelist{\cite{rieffel-irr-rot-pres}\cite{MR595412}}.  This result was later generalized to the higher dimensional noncommutative tori by Elliott~\cite{MR731772}, and to the reduced twisted group algebra $C^*_{r, \omega}(\Gamma)$ for a certain class of discrete groups by Mathai~\cite{MR2218025}.

The K-theory isomorphism results for the above continuous deformations can be shown following a common pattern: given a continuous (often smooth) family of pre-C$^*$-algebra structure $(m_t)_{t \in I}$ on a single vector space $A$, one forms the C$^*$-algebra $A_I$ of `global sections' for the bundle of C$^*$-algebras $A_t = \overline{(A; m_t)}$ for $t \in I$.  Then one shows that the evaluation homomorphism $A_I \rightarrow A_t$ induces an isomorphism in the K-theory for any $t$, from which we obtain the natural isomorphism of the groups $K_*(A_t)$ for the different values of $t \in I$.

Since the above construction only uses algebra homomorphisms, one may mimic the construction at the level of cyclic homology groups of smooth subalgebras.  Then, we can understand how the cyclic cocycles $\HC^*(A_t)$ pair with the groups $K_*(A_t)$ and $\HP_*(A_t)$, by identifying the families $(\phi_t \in \HC^*(A_t))_{t \in I}$ which become constant after pulling back to $A_I$.  The global section algebra $A_I$ represents the `total space' of the bundle of `noncommutative spaces' represented by $(A_t)_{t \in I}$, and the evaluation maps $\HP_*(A_I) \rightarrow \HP_*(A_t)$ can be considered as the restriction of cohomology classes to the fiber at $t$.  The Gauss--Manin connection on the periodic cyclic theory introduced by Getzler~\cite{MR1261901} gives the infinitesimal parallel transport operation on the family $(\HP_*(A_t))_{t \in I}$ such that the image of $\HP_*(A_I)$ becomes flat.

Although the Gauss--Manin connection form has a simple presentation, the monodromy operator could be rather complicated~\citelist{\cite{MR2308582}\cite{MR2762543}} in general.  Since it is a priori expressed as an infinite series of operators on the infinite dimensional vector space of cyclic chains, there is a issue of convergence when one tries to work on the strict deformation setting.  Thus, in order to compute it (even to make sense of it), one needs to work on an additional structure such as Poisson manifolds which gives us a nicer cohomology group in which the Maurer--Cartan form resides~\citelist{\cite{MR2444365}*{Sections~1.33--36}\cite{arXiv:0906.3122}}.

Our main result is that, for $2$-cocycle deformation of Fell bundles, the action of the Gauss--Manin connection on cyclic cohomology classes is identified with the action of the group $2$-cocycle via the cup product operation (Theorem~\ref{thm:gauss-manin-monodromy}).  Because of this translation, we are able to integrate the action and construct the monodromy operator in a strict deformation setting.  In the case of twisted group algebras, the resulting formula is comparable to that of Mathai.

The key machinery of the proof is Karoubi's embedding of group cohomology into the cyclic cohomology of the group algebra.  On the one hand this embedding is a ring homomorphism with respect to the cup product on the cyclic cohomology (Section~\ref{sec:alg-str-on-cyc-coh-of-grp}).  On the other hand, the action of Gauss--Manin connection is identified with that of the group $2$-cocycle parametrizing the deformation.  Thus, the integration of the connection makes sense as the exponentiation of the $2$-cocycle in the group cohomology ring.  We remark that the cup product operation in a more general context of Hopf-cyclic theory is an actively developing subject~\citelist{\cite{MR2112033}\cite{MR2475613}\cite{arXiv:1011.2735}}.

Most of the results in this paper should be true for the fell bundles over locally compact groups after reasonable modifications, but we restrict ourselves to the case of $\R^n$-algebras which is handled in the last part of the paper.  Even without Fourier decomposition, we can directly handle the action of generators of the action in an algebraic way as mutually commuting derivations.  In this case, the exponentiation of the curvature takes a simpler form because the square of interior product of a derivation is cohomologically trivial.  As a consequence, we recover the Connes--Thom isomorphism in cyclic cohomology~\cite{MR945014} for the case of invariant cocycles.

\paragraph{Acknowledgment}  The author thanks Ryszard Nest, whose suggestion to look into the theory of Gauss--Manin connection was the reason this project came into existence.  He is also thankful to Sergey Neshveyev and Catherine Oikonomides for illuminating discussions.

\section{Preliminaries}

Throughout the paper $I$ denotes the closed interval $[-1, 1]$.  When $V$ is a vector space, we denote its linear dual by $V'$.  When there is no fear of confusion, the tensor product $a^0 \otimes \cdots \otimes a^n$ is written as $(a^0, \ldots, a^n)$.

Let $A$ be an algebra over $\C$.  We let $A^+$ denote its unitization, given by $A \oplus \C$ endowed with the product structure $(a, \lambda) . (b, \mu) = (ab + \mu a + \lambda b, \lambda \mu)$.  For notational simplicity we write $a + \lambda = (a, \lambda)$ when there is no fear of confusion.  The enveloping algebra $A \otimes A^\opos$ is denoted by $A^e$.  Thus, an $A$-bimodule is the same thing as an $A^e$-module.

Although most of the homological constructions in this paper can be carried out for differential graded algebras or even more general $A_\infty$-algebras, we stick to the case of ungraded algebras.  We also use the $\Z/2\Z$-gradings instead of the $\Z$-gradings for the periodic theory.

\subsection{Cyclic homology theories}

We follow the convention of Loday~\cite{MR1600246} for the most part on the fundamental concepts of cyclic homology theory, such as
\begin{itemize}
\item a \textit{cyclic set} $(X_*, d_*, s_*, t_*)$, which is a contravariant functor from the cyclic category $\Delta^\cycl$ to the category of sets,
\item a \textit{cyclic module} $(C_*, d_*, s_*, \lambda_*)$, which is a contravariant functor from $\Delta^\cycl$ to the category of vector spaces,
\item the \textit{cyclic bicomplex} $\CC_*(C)$ associated with a cyclic module $C_*$,
\item and a \textit{mixed module} $(C_*, b, B)$.
\end{itemize}

When $C_*$ is a cyclic module, there are two associated mixed modules, the \textit{$(b, B)$-bicomplex} $\bBcplx_*(C)$ and the \textit{normalized $(b, B)$-bicomplex} $\nbBcplx_*(C)$.  The first one is given by
\[
\begin{split}
b_n &= \sum_{j=0}^{n} (-1)^j d_j \colon C_n \rightarrow C_{n-1}\\
B_n &= (1 - \lambda_{n+1}) \lambda_{n+1} s_n \sum_{j=0}^n \lambda_n^j \colon C_n \rightarrow C_{n+1}.
\end{split}
\]
The second one is formed by the groups
\[
\nC_n = C_n / \left (s_0(C_{n-1}) + \cdots + s_{n-1}(C_{n-1}) \right )\\
\]
and the $b$, $B$ operators induced by the ones of $\bBcplx_*(C)$.

When $C = (C_*, b, B)$ is a mixed module, we let $(C^n)_{n \in \N}$ denote the dual complex $(C_n', b^t, B^t)$ and $\HC^*(C)$ be the cohomology group of $(\Tot C^*, b^t - B^t)$.  The differential is different from Loday's convention $b + B$, but should not bring in any fundamental difference, the $S$-morphisms have modified components when we express them as matrices.

When $C_*$ is a cyclic module, we write $\HC^*(C) = \HC^*(\bBcplx(C))$ when there is no fear of confusion.  In that case, the subcomplex $\nbBcplx(C)' \subset \bBcplx(C)'$ gives the same cyclic cohomology group.  The cyclic bicomplex
\[
\CC^*(C) = (C_{m-n}', b^t, (b')^t, 1-\lambda, \sum \lambda^j)
\]
and the cyclic cochain complex
\[
C_\lambda^* = \left ( \ker \left ( 1 - \lambda_n \right ), b^t \right )_{n\in \N} \subset (C^*, b^t)
\]
also compute the cyclic cohomology group of $C$.

\begin{example}
Let $A$ be a unital algebra.  The associated cyclic module $C_*(A)$ is given by $C_n(A) = A^{\otimes n + 1}$, and the normalized $(b, B)$-bicomplex is formed by by $\nC_n(A) = A \otimes (A/\C)^{\otimes n}$.  As usual, we put $\Omega^0(A) = A$ and $\Omega^n(A) = A^+ \otimes A^{\otimes n}$.  Thus, the cyclic bicomplex $\CC_*(A)$ is a direct sum of the $\Omega^n(A)$.  The definition of cyclic bicomplex makes sense for nonunital algebras as well (although there is no longer a cyclic module behind it), and it can be identified with the normalized $(b, B)$-bicomplex of $A^+$ except for the first row, where they differ by a direct summand $\C$ at every even column.
\end{example}

\begin{example}
Let $\Gamma$ be a discrete group.  Its nerve cyclic set $\clsB_* \Gamma$ is given by $\clsB_n \Gamma = \Gamma^{n}$ endowed the simplicial structure of nerve of $\Gamma$ regarded as a category with one object, and the cyclic structure
\begin{align*}
t_n(g_1, \ldots, g_n) &= ((g_1 \cdots g_n)^{-1}, g_1, \ldots, g_{n-1}).
\end{align*}
An $n$-cocycle $\phi$ in $\C[\clsB_n \Gamma]'$ is normalized when it satisfies
\begin{equation*}
% \label{eq:grp-cocycl-normalization}
\phi(g_1, \ldots, g_{j-1}, e, g_{j+1}, \cdots, g_n) = 0
\end{equation*}
for all $1 \le j \le n$.

On one hand, the associated cyclic module $\C[\clsB_*\Gamma]$ is equal to the standard resolution of the trivial $\Gamma$-module $\C$.  On the other hand, it can be regarded as a direct summand of $C_*(\C[\Gamma])$ via the morphism
\[
\C[\clsB_n \Gamma] \rightarrow C_n(\C[\Gamma]), \quad (g_1, \ldots, g_n) \mapsto ((g_1 \cdots g_n)^{-1}, g_1, \ldots, g_n).
\]
This induces Karoubi's inclusion~\cite{MR732839}
\[
H^n(\Gamma) \oplus H^{n-2}(\Gamma) \oplus \cdots \oplus H^{n - 2 \lfloor n / 2 \rfloor}(\Gamma) \simeq \HC^n(\clsB_* \Gamma) \subset \HC^n(\C[\Gamma])
\]
denoted by $\phi \mapsto \tilde{\phi}$.  For example, the unit class $1 \in H^0(\Gamma)$ is mapped to the class standard trace $\tau(g) = \delta_{e, g}$ under this correspondence.
\end{example}

\subsection{Cup product in cyclic cohomology}

When $C_*$ and $D_*$ are mixed complexes, we obtain a new mixed complex $C \times D$ given by
\[
(C \times D)_n = C_n \otimes D_n,\quad b^{C \times D}_n = b^C_n \otimes b^D_n,\quad B^{C \times D}_n = B^C_n \otimes B^D_n,
\]
and another one $C \otimes D$ given by
\begin{align*}
(C \otimes D)_n &= \bigoplus_{p+q=n} C_p \otimes C_q,\\
b^{C \otimes D}_n &= \bigoplus_{p, q} b_p \otimes 1 + (-1)^p \otimes b_q,\\
B^{C \otimes D}_n &= \bigoplus_{p, q} B_p \otimes 1 + (-1)^p \otimes B_q.
\end{align*}
When $C$ and $D$ are cyclic modules, we write $C \times D, C \otimes D$ instead of $\bBcplx_* C \times \bBcplx_* D, \bBcplx_* C \otimes \bBcplx_* D$ when there is no fear of confusion.

When $(p, q)$ is a pair of nonnegative integers, a \textit{$(p, q)$-shuffle} is a partition of $\ensemble{1, \ldots, p + q}$ into two subsets $\ensemble{m_1 < \ldots < m_p}, \ensemble{n_1 < \ldots < n_q}$ of cardinality $p$ and $q$.  To such a shuffle we associate the element $\sigma$ of $S_{p + q}$ determined by
\[
\sigma(x) = 
\begin{cases}
m_x & (1 \le x \le p)\\
n_{x-p} & (p < x \le p + q).
\end{cases}
\]
We let $S_{p, q}$ denote the set of $(p, q)$-shuffles, and identify it with a subset of $S_{p + q}$ by the above correspondence.

Suppose that $C_*$ and $D_*$ are cyclic modules.  Let $p + q = n$, and let $\sh_{p, q}$ denote the linear map of $C_p \otimes D_q$ into $C_n \otimes D_n$ defined by
\[
\sh_{p, q}(a, b) = \sum_{\sigma \in S_{p, q}} s_{\sigma(p+1)} \cdots s_{\sigma(p+q)}(a) \otimes s_{\sigma(1)} \cdots s_{\sigma(p)}(b).
\]
The direct sum of these maps for $p + q = n$ defines a map $(C \otimes D)_n \rightarrow (C \times D)_n$ denoted by $\sh_n$.

Next, a \textit{$(p, q)$-cyclic shuffle} is any element $\sigma$ of $S_{p + q}$ which can be written as $\sigma = \mu \circ t_{1\cdots p}^a t_{(p+1)\cdots(p+q)}^b$ for a $(p, q)$-shuffle $\mu$ and $a, b \in \N$, which also satisfies $\sigma(1) < \sigma(p+1)$.  Here, $t_{1\cdots p}$ is the cyclic permutation $t_{1\cdots p}(k) = k\pmod{p}$ on $\ensemble{1, \ldots, p}$, and $t_{(p+1)\cdots(p+q)}$ is the one $t_{(p+1)\cdots(p+q)}(k) = k+1\pmod{q}$ on $\ensemble{p+1, \ldots, p+q}$.  We put $S^\cycl_{p, q}$ the set of $(p, q)$-cyclic shuffles.

For cyclic modules $C_*$ and $D_*$, one obtains a map $\perp_{p, q}$ from $C_p \otimes D_q$ to $C_{p+q} \otimes D_{p+q}$ given by
\[
x \otimes y \mapsto \sum_{\sigma = \mu t_{1\cdots p}^a t_{(p+1)\cdots(p+q)}^b\in S^\cycl_{p, q}} (-1)^{|s|} s_{\mu(p+1)} \cdots s_{\mu(p+q)} t^a(x) \otimes s_{\mu(1)} \cdots s_{\mu(p)} t^b (y).
\]
We then obtain a linear map $\sh'_{p, q}$ of $C_p \otimes D_q$ into $C_{n+2} \otimes D_{n+2}$ by
\[
\sh'_{p, q}(a, b) = s(a) \perp_{p+1, q+1} s(b),
\]
where $s$ is the extra degeneracy operator given by $t_{p+1} s_p\colon C_p \rightarrow C_{p+1}$ and similarly for $D_*$.  Then,  the direct sum of the $\sh'_{p, q}$ for $p + q = n$ defines a map $(C \otimes D)_n \rightarrow (C \times D)_{n+2}$ denoted by $\sh'_n$.

The map $\Sh = \sh - \sh'$ from $\Tot \bBcplx(C \otimes D)$ to $\Tot \bBcplx(C \times D)$ is a morphism of the total complexes of the mixed complexes, which is actually a quasi-isomorphism~\cite{MR1600246}*{Thteorem~4.3.8}.

Consider the transpose map $\Sh^t$ between the duals of mixed complexes, and let
\[
\Psi\colon \HC^*(C \otimes D) \rightarrow \colon \HC^*(C \times D)
\]
denote the inverse of the induced map $\HC^*(\Sh^t)$.  Via the standard identification $\HC^*(C) \otimes \HC^*(D) \simeq \HC^*(C \otimes D)$, the map $\Psi$ induces the external cup product
\[
\cup\colon \HC^*(C) \otimes \HC^*(D) \rightarrow \HC^*(C \times D).
\]

Suppose that $A$ and $B$ are unital algebras.  We have a natural identification $\bBcplx_*(A) \times \bBcplx_*(B) = \bBcplx_*(A \otimes B)$.  Hence the above cup product becomes
\[
\cup\colon \HC^*(A) \otimes \HC^*(B) \rightarrow \HC^*(A \otimes B).
\]

There is a similar product structure on the Hochschild cohomology group~\cite{MR0077480}*{Chapter~XI},
\[
\vee\colon \Ext_{A^e}(A, A') \times \Ext_{B^e}(B, B') \rightarrow \Ext_{(A \otimes B)^e}(A \otimes B, A' \otimes B').
\]
Composing this with the natural inclusion $A' \otimes B' \rightarrow (A \otimes B)'$, we obtain the external product
\[
H^*(A, A') \times H^*(B, B') \rightarrow H^*(A \otimes B, (A \otimes B)'),
\]
which is also denoted by $\vee$ by abuse of notation.

\begin{prop}
\label{prop:I-is-hom-for-cup-and-vee}
 Let $\phi \in \HC^m(A)$ and $\psi \in \HC^n(B)$.  Then we have $I(\phi \cup \psi) = I(\phi) \vee I(\psi)$, where $I\colon \HC^*(A) \to H^*(A, A')$ is the standard map induced by the identification
 $$
 H^*(A, A') \simeq H^*(\bBcplx(C_*(A))' / \bBcplx(C_*(A))'[2]).
 $$
\end{prop}

\begin{proof}
We let $C_*$ and $D_*$ denote the cyclic modules $C_*(A)$ and $C_*(B)$.  The cup product in cyclic theory was related to the diagram
\[
\xymatrix{
0 & \ar[l] ((C \otimes D)', b^t) & \ar[l]_{I} \Tot \bBcplx(C \otimes D)' & \ar[l]_{S}  \Tot \bBcplx(C \otimes D)'[2] & \ar[l] 0 \\
0 & \ar[l] \ar[u]^{\sh^t} ((C \times D)', b^t) & \ar[l]^{I} \ar[u]^{\Sh^t} \Tot \bBcplx(C \times D)' & \ar[l]^{S} \ar[u]^{\Sh^t} \Tot \bBcplx(C \times D)'[2] & \ar[l] 0
},
\]
see~\cite{MR1600246}*{proof of Theorem~4.3.8}.  The vertical arrows are quasi-isomorphisms.

Let $\HC^*(\Sh^t)$ denote the isomorphism induced by $\Sh^t$ on the cohomology of the total complexes, and similarly $\HH^*(\sh^t)$ be the isomorphism of the $b^t$-complexes induced by $\sh^t$.

On the one hand, the image of $\phi \otimes \psi \in \Tot \bBcplx(C \otimes D)'$ under $\HC^*(\Sh^t)^{-1}$ at the middle column represents $\phi \cup \psi \in \HC^*(A \otimes B)$.  On the other hand, $\HH^*(\sh^t)^{-1}$ at the left column gives the $\vee$-product in Hochschild cohomology~\cite{MR0077480}*{pp.~218--219}. 

Moreover, the image of $\phi \otimes \psi$ under $I$ in the upper row is equal to $I(\phi) \otimes I(\psi)$ by the definition of $C \otimes D$.  Hence we obtain the assertion.
\end{proof}

\subsection{Action of Hochschild cochains on the normalized \texorpdfstring{$(b, B)$}{(b, B)}-bicomplex}

We mostly adopt the convention of Getzler~\cite{MR1261901}, but make as much simplifications as possible.  In particular, we start from ungraded algebras, and we do not consider the brace operation on Hochschild cochains.  We reproduce some of the computations in~\cite{MR1261901} adopted to our drastically simplified setting.

Let $A$ be a unital algebra over $\C$.  The graded space of Hochschild cochains with values in $A$ is defined by
\[
C^n(A, A) = \Lin(A^{\otimes n}, A),
\]
endowed with the Hochschild differential.  When $D \in C^n(A, A)$, we write $\absolute{D} = n - 1$.  There is a subcomplex $\nC^*(A, A)$, the normalized Hochschild cochain complex, consisting of the cochains satisfying
\[
 D(a^1, \ldots, a^{j-1}, 1, a^{j+1}, \ldots, a^n) = 0 \quad (1 \le j \le n).
\]

When $D_1 \in C^m(A, A)$ and $D_2 \in C^n(A, A)$, their \textit{pre-Lie product} $D_1 \circ D_2$ in $C^{m + n}(A, A)$ is defined by the formula
\begin{equation*}
D_1 \circ D_2(a^1, \ldots, a^{m + n}) = \sum_{j=0}^m (-1)^{j |D_2|} D_1(a^1, \ldots, D_2(a^{j+1}, \ldots, a^{j+n}), \ldots, a^{m+n}).
\end{equation*}
The subcomplex $\nC^*(A, A)$ becomes a subalgebra for this product structure.  The product structure of $A$ can be considered as an element $m$ of $C^2(A, A)$, and the associativity of $m$ can be rephrased as $m \circ m = 0$.

The \textit{Gerstenhaber bracket} is given by the supercommutator
\[
[D_1, D_2]_G = D_1 \circ D_2 - (-1)^{\absolute{D_1} \absolute{D_2}} D_2 \circ D_1.
\]
The Hochschild differential $\delta$ on $C^*(A, A)$ can be written as
\[
\delta(D) = m \circ D - (-1)^{\absolute{D}} D \circ m = [m, D]_G.
\]
For example, an element $D \in C^1(A, A)$ is a derivation with respect to $m$ if and only if it satisfies $\delta(D) = 0$.

Let $D$ be a cochain in $\nC^m(A, A)$.  There is an operator $\Bop_D$ from $\nC_n(A)$ to $\nC_{n-m + 2}(A)$ defined by
\begin{multline*}
% \label{eq:Bop-single-D-defn}
\Bop_D(a^0, \ldots, a^n) \\
= \sum_{\substack{0 \le j \le k \\ \le n-m}} (-1)^{n(j+1) + |D|(k-j)} (1, a^{j+1}, \ldots, D(a^{k+1}, \ldots, a^{k+m}), \ldots, a^n, a^0, \ldots, a^{j}).
\end{multline*}
This can be regarded as an analogue of $B$.  Note that $\Bop_D$, like $B$ itself, does not involve the product structure of $A$.

Next, there is another operator $\bop_{D}$ from $\nC_n(A)$ to $\nC_{n-m}(A)$ defined by
\begin{equation*}
% \label{eq:bop-single-D-defn}
\bop_{D}(a^0, \ldots, a^n) = (-1)^{m n}(D(a^{n-m + 1}, \ldots, a^n) a^0, a^1,\ldots, a^{n-m}).
\end{equation*}
We put $\iota_D = \bop_D - \Bop_D$, and call it the \textit{interior product by} $D$.

Finally, the \textit{Lie derivative} $\Lop_D\colon \nC_n(A) \rightarrow \nC_{n-m+1}(A)$ of $D$ is defined by
\begin{multline}
\label{eq:L-D-formula}
\Lop_{ D } (a^0, \ldots, a^n) = \sum_{j=0}^{n-m} (-1)^{|D| (j+1)}(a^0, \ldots, D(a^{j+1}, \ldots, a^{j+m}), \ldots, a^n) \\
+ \sum_{j=n-m+1}^n (-1)^{n(n- j)} (D(a^{j+1}, \ldots, a^0, a^1, \ldots, a^{j+m-n-1}), a^{j+m-n}, \ldots, a^j).
\end{multline}

The above operations are related by the following formula.

\begin{prop}[Cartan homotopy formula~\citelist{\cite{MR0154906}\cite{MR1261901}}]
% \label{prop:cartan-homotopy-formula}
Let $D$ be a cochain in $\nC^*(A, A)$.  We then have
\begin{equation}
\label{eq:cartan-homotopy-formula}
[b - B, \iota_{D}] = \Lop_{D} - \iota_{\delta(D)},
\end{equation}
where the bracket on the left hand side is the graded commutator in $\End(\nC_*(A))$.
\end{prop}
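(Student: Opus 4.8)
The plan is to prove (\ref{eq:cartan-homotopy-formula}) by a direct computation, organised according to the homological degree of the target. Expanding the graded commutator and using $\iota_D = \bop_D - \Bop_D$ gives
\[
[b - B, \iota_D] = [b, \bop_D] - [b, \Bop_D] - [B, \bop_D] + [B, \Bop_D].
\]
Since $\bop_D$ lowers degree by $m$ while $\Bop_D$ lowers it by $m - 2$, and $b$, $B$ shift degree by $-1$, $+1$ respectively, the four terms land in three distinct target degrees: starting from $\nC_n(A)$, the term $[b, \bop_D]$ maps into $\nC_{n - m - 1}(A)$, the pair $[b, \Bop_D]$ and $[B, \bop_D]$ map into $\nC_{n - m + 1}(A)$, and $[B, \Bop_D]$ maps into $\nC_{n - m + 3}(A)$. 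On the right-hand side $\Lop_D$ and $\Bop_{\delta D}$ land in $\nC_{n - m + 1}(A)$ (recall $\delta D \in \nC^{m + 1}(A, A)$), whereas $\bop_{\delta D}$ lands in $\nC_{n - m - 1}(A)$. Matching homogeneous components therefore splits the statement into the three identities
\[
[b, \bop_D] = - \bop_{\delta D}, \quad \Lop_D = -[b, \Bop_D] - [B, \bop_D] - \Bop_{\delta D}, \quad [B, \Bop_D] = 0,
\]
with the signs dictated by the conventions fixed above.

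First I would dispose of the two extreme identities, which are structurally the simpler ones. The identity $[B, \Bop_D] = 0$ involves only operators that insert units and cyclically permute tensor factors, never the product of $A$; it is the analogue of $B^2 = 0$ and follows from the combinatorics of the cyclic sums $\sum_j \lambda^j$ and the unit insertions, after discarding degenerate terms in the normalised complex. The identity $[b, \bop_D] = -\bop_{\delta D}$ expresses the compatibility of $\bop$ with the Hochschild differential: when one commutes $b = \sum_j (-1)^j d_j$ past $\bop_D$, the face maps whose image meets the block of arguments fed into $D$ assemble, through the product $m$, into the two terms $m \circ D$ and $D \circ m$ of $\delta D = [m, D]_G$, while the remaining face maps cancel against those of $\bop_D b$.

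The crux is the middle identity, where the full Lie derivative (\ref{eq:L-D-formula}) must be recovered. Here I would expand $[b, \Bop_D]$ and $[B, \bop_D]$ separately. The commutator $[b, \Bop_D]$ should produce the first sum of (\ref{eq:L-D-formula}) --- the \emph{interior} action of $D$ on consecutive arguments --- together with error terms coming from the face maps that act on the unit inserted by $\Bop_D$ and from the boundary of the cyclic range of summation. The commutator $[B, \bop_D]$ should produce the second sum of (\ref{eq:L-D-formula}) --- the \emph{wrap-around} action of $D$ across the seam between $a^n$ and $a^0$, which is precisely where the product appearing in $\bop_D$ is needed --- together with the correction $\Bop_{\delta D}$ and a further family of error terms. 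The content of the identity is that all the error terms produced by the two commutators cancel in pairs.

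The main obstacle will be exactly this last cancellation: it is a substantial exercise in sign and index bookkeeping, in which the telescoping of the cyclic sums $\sum_j \lambda^j$ hidden in $B$ and $\Bop_D$ must be matched against the unit insertions, and the range of the double sum $\sum_{0 \le j \le k \le n - m}$ defining $\Bop_D$ must be tracked across the boundaries where $D$ begins to overlap the inserted unit or the cyclically permuted factor. Working throughout in the normalised complex $\nC_*(A)$ with normalised $D$ is what makes this feasible, since it annihilates degenerate tensors and thereby eliminates the bulk of the would-be error terms before any cancellation is needed. As both sides are linear in $D$, there is no loss in treating a general normalised $D \in \nC^m(A, A)$ directly.
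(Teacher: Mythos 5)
Your proposal is correct and follows essentially the same route as the paper's proof: a direct expansion of $[b-B,\iota_D]$ into the four commutators, with $[b,\bop_D]=-\bop_{\delta(D)}$ obtained by commuting face maps past the block fed into $D$, with $B\Bop_D$ and $\Bop_D B$ each vanishing separately on the normalized complex, and with the error terms of $[b,\Bop_D]$ cancelling pairwise against $B\bop_D$ and $\bop_D B$ --- exactly the mechanism you predict. The one slip in your bookkeeping is that $\Bop_{\delta(D)}$ actually emerges from $[b,\Bop_D]$ (from the face maps acting on the inserted unit and on the arguments adjacent to $D$), while $[B,\bop_D]$ supplies only the wrap-around terms of $\Lop_D$; since your middle identity lumps these contributions together anyway, this does not affect the argument.
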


\begin{proof}
We write down the computation for the case $D \in \nC^k(A, A)$ when $k$ is even.  The odd case is proved in a similar way.

Since $\iota_D$ is an even operator on $\nC_*(A)$, the graded commutator on the left hand side of~\eqref{eq:cartan-homotopy-formula} becomes the usual commutator.  In the following computation, $a^0$ denotes a variable on $A$, and $a^1, \ldots, a^n$ are the ones on $A/\C$.

First, one computes $b \bop_D (a^0, \ldots, a^n)$ as
\begin{equation}
\label{eq:b-bD-comp}
\begin{split}
& \sum_{j=0}^{n-k-1} (-1)^{j} (D(a^{n-k+1}, \ldots, a^n) a^0, \ldots, a^j a^{j+1}, \ldots, a^{n-k})\\
 &\quad + (-1)^{n-k}(a^{n-k}D(a^{n-k+1}, \ldots, a^n) a^0, \ldots, a^{n-k-1})\\
&= \bop_D b(a^0, \ldots, a^n) -  \sum_{j=n-k}^{n-1} (-1)^{j} (D(a^{n-k}, \ldots, a^j a^{j+1}, \ldots, a^n)a^0, \ldots, a^{n-k-1})\\
 & \quad - (-1)^n (D(a^{n-k}, \ldots, a^{n-1}) a^n a^0, \ldots, a^{n-k-1}) \\
 & \quad - (-1)^{n+1} (a^{n-k}D(a^{n-k+1}, \ldots, a^n) a^0, \ldots, a^{n-k-1})\\
&= \big ( \bop_D b - \bop_{m \circ D} - \bop_{D \circ m} \big )(a^0, \ldots, a^n).
\end{split}
\end{equation}

Next, $b \Bop_D(a^0, \ldots, a^n)$ can be computed as the sum of
\begin{gather}
\label{eq:b-BD-comp-1}
 (-1)^{n (j+1) + p - j} (a^{j+1},\ldots, D(a^{p+1}, \ldots, a^{p+k}), \ldots, a^n, a^0, \ldots, a^{j}),\\
\label{eq:b-BD-comp-2}
(-1)^{n (j+1) + p - i} (1, a^{j+1}, \ldots, a^i a^{i+1}, \ldots, D(a^{p+1}, \ldots, a^{p+k}), \ldots, a^n, a^0, \ldots, a^{j-1})
\end{gather}
for $j+1 \le i \le p-1$,
\begin{gather}
\label{eq:b-BD-comp-3}
(-1)^{n (j + 1)} (1, a^{j+1}, \ldots, a^{p} D(a^{p+1}, \ldots, a^{p+k}), \ldots, a^n, a^0, \ldots, a^{j}),\\
\label{eq:b-BD-comp-4}
(-1)^{n (j + 1) - 1}(1, a^{j+1}, \ldots, D(a^{p+1}, \ldots, a^{p+k}) a^{p+k+1}, \ldots, a^n, a^0, \ldots, a^j),\\
\label{eq:b-BD-comp-5}
(-1)^{n (j + 1) + p + k - 1 - i} (1, a^{j+1} \ldots, D(a^{p+1}, \ldots, a^{p+k}), \ldots, a^i a^{i+1}, \ldots, a^n, a^0, \ldots, a^j)
\end{gather}
for $p+k+1 \le i \le n-1$,
\begin{gather}
\label{eq:b-BD-comp-6}
(-1)^{n( j + 1) + p + k - 1 - n} (1, a^{j+1} \ldots, D(a^{p+1}, \ldots, a^{p+k}), \ldots, a^n a^0, \ldots, a^j),\\
\label{eq:b-BD-comp-7}
(-1)^{n (j + 1) + p + k - n - i} (1, a^{j+1} \ldots, D(a^{p+1}, \ldots, a^{p+k}),  \ldots, a^n, a^0,\ldots,a^i a^{i+1}, \ldots, a^{j})
\end{gather}
for $0 \le i \le j-1$, and
\begin{equation}
\label{eq:b-BD-comp-8}
(-1)^{n(j + 1) + p + k - n - j} (a^j, \ldots, D(a^{p+1}, \ldots, a^{p+k}), \ldots, a^n, a^0, \ldots, a^{j-1}),
\end{equation}
where $j = 0, \ldots, n - k$ and $p = j, \ldots, n - k$.  The exponent of $-1$ in~\eqref{eq:b-BD-comp-8} is equal to $n j + p - (j - 1) - 1$.  Hence, if we start from $j=n-k$ and gradually decrease $j$, the terms of the form~\eqref{eq:b-BD-comp-8} are cancelled by those of the form~\eqref{eq:b-BD-comp-1} for $j < p$ in the next iteration.  The terms~\eqref{eq:b-BD-comp-8} at $j = 0$ gives
\begin{multline}
\label{eq:where-LD-pops-out}
\sum_{p=0}^{n-k}(-1)^{p}(a^0, \ldots, D(a^{p+1}, \ldots, a^{p+k}), \ldots, a^{n}) = - \Lop_D(a^0, \ldots, a^n) \\
+ \sum_{p=n-k+1}^n (-1)^{n(n-p)} (D(a^{p+1}, \ldots, a^n, a^0, \ldots, a^{p+k-n-1}), a^{p+k-n},\ldots, a^j).
\end{multline}
The terms~\eqref{eq:b-BD-comp-3} and~\eqref{eq:b-BD-comp-4} add up to
\begin{equation}
\label{eq:where-BmD-pops-up}
- \Bop_{m \circ D}(a^0, \ldots, a^n)
 + \sum (-1)^{n(j+1)-1}(1, a^{j+1}, \ldots, D(a^{n-k+1}, \ldots, a^n) a^0, \ldots, a^j).
\end{equation}

For $\Bop_D b (a^0, \ldots, a^n)$, we get the contributions of
\begin{equation}
\label{eq:BD-b-comp-1}
(-1)^{i + (n-1) (j + 1) + p-j}(1, a^{j+1}, \ldots, D(a^{p+1}, \ldots, a^{p+k}), \ldots, a^i a^{i+1},\ldots, a^n, a^0, \ldots, a^j)
\end{equation}
for $p + k < i$,
\begin{equation}
\label{eq:BD-b-comp-2}
(-1)^{i + (n-1) (j + 1) + p - j} (1, a^{j+1}, \ldots, D(a^{p+1}, \ldots, a^i a^{i+1}, \ldots, a^{p+k+1}), \ldots, a^n, a^0, \ldots, a^j)
\end{equation}
for $j + 1 \le i \le p+k$,
\begin{equation}
\label{eq:BD-b-comp-3}
(-1)^{i + (n - 1) (j + 1) + p - j - 1}(1, a^{j+1}, \ldots, a^i a^{i+1}, \ldots, D(a^{p+1}, \ldots, a^{p+k}), \ldots, a^n, a^0, \ldots, a^j)
\end{equation}
for $j+1 \le i \le p-1$,
\begin{equation}
\label{eq:BD-b-comp-4}
(-1)^{i + (n - 1) j + p - j} (1, a^{j+1}, \ldots, D(a^{p+1}, \ldots, a^{p+k}), \ldots, a^n, a^0, \ldots, a^i a^{i+1},  \ldots, a^j)
\end{equation}
for $0 \le i \le j-1$, and
\begin{equation}
\label{eq:BD-b-comp-5}
(-1)^{n + (n-1) (j+1) + p - j} (1, a^{j+1}, \ldots, D(a^{p+1}, \ldots, a^{p+k}), \ldots, a^{n-1}, a^n a^0, \ldots, a^{j}).
\end{equation}
The exponent of $-1$ in~\eqref{eq:BD-b-comp-1} differs from the one in~\eqref{eq:b-BD-comp-5} by an even integer.  Similarly,~\eqref{eq:BD-b-comp-3} is equal to~\eqref{eq:b-BD-comp-2},~\eqref{eq:BD-b-comp-4} is equal to~\eqref{eq:b-BD-comp-7}, and~\eqref{eq:BD-b-comp-5} is equal to~\eqref{eq:b-BD-comp-6}.  The exponent of $-1$ in~\eqref{eq:BD-b-comp-2} is equal to $n(j+1) + i-(p+1) \pmod{2}$.  Hence the sum of these terms is equal to $\Bop_{D \circ m}(a^0, \ldots, a^n)$.

Summarizing the above computations, the effect of $- b \Bop_D + \Bop_D b$ on $(a^0, \ldots, a^n)$ can be expressed as the sum of~\eqref{eq:b-BD-comp-1} for $j = p$,~\eqref{eq:where-LD-pops-out},~\eqref{eq:where-BmD-pops-up}, which is equal to
\begin{multline}
\label{eq:comm-b-BD}
\big ( \Lop_D + \Bop_{m \circ D} + \Bop_{D \circ m} \big )(a^0, \ldots, a^n)\\
+ \sum (-1)^{n (j + 1)}(1, a^{j+1}, \ldots, D(a^{n-k+1}, \ldots, a^n)a^0, \ldots, a^j) \\
+ \sum_{p=n-k+1}^n(-1)^{n(n-p)+1}(D(a^{p+1}, \ldots, a^n, a^0, \ldots, a^{p+k-n-1}), a^{p+k-n}, \ldots, a^j)\\
+ \sum_{j=0}^{n-k} (-1)^{n(j+1) + 1}(D(a^{j+1}, \ldots, a^{p+k}), \ldots, a^n, a^0, \ldots, a^j).
\end{multline}

Next, we compute $B \bop_D(a^0, \ldots, a^n)$ as
\begin{multline*}
(1, D(a^{n-k+1}, \ldots, a^n) a^0, \ldots, a^{n-k}) \\
+ \sum_{i=1}^{n-k} (-1)^{n + (n-k)i }(1, a^{n-k-i+1}, \ldots, D(a^{n-k+1}, \ldots, a^n) a^0, \ldots, a^{n-k-i})
\end{multline*}
The exponent of $-1$ is equal to $n + n i \equiv n + n (n - k + i + 1) \pmod{2}$.  Hence this cancels out with the second part of~\eqref{eq:comm-b-BD}.

Next, $\bop_D B(a^0, \ldots, a^n)$ can be computed as
\[
\sum_{j=0}^n (-1)^{n j} (D(a^{n-j-k+1}, \ldots, a^{n-j}), a^{n-j+1}, \ldots, a^n, a^0, \ldots, a^{n-j-k})),
\]
This cancels out with the last two parts of~\eqref{eq:comm-b-BD}.

Finally, we have $B \Bop_D = 0 = \Bop_D B$ because we are considering the action on the normalized chains

Combining~\eqref{eq:b-bD-comp}, \eqref{eq:comm-b-BD}, and the above paragraphs, we obtain
\[
(b - B) (\bop_D - \Bop_D) - (\bop_D - \Bop_D) (b - B) = - \bop_{m \circ D + D \circ m} + \Lop_D + \Bop_{m \circ D + D \circ m}.
\]
Since $|D|$ is odd, we have $\delta(D) = m \circ D + D \circ m$.  Hence we obtain the desired equality~\eqref{eq:cartan-homotopy-formula}. 
\end{proof}

\subsection{Gauss--Manin connection}

Let $A$ be a vector space, and $(m_\nu)_{\nu \in I^k}$ be a smooth family of associative algebra structures on $A$.  We denote by $A_\nu$ the algebra $(A, m_\nu)$, and by $A_{I^k}$ the space $C^\infty(I^k) \otimes A$ endowed with the `pointwise product structure' $A_\nu$ at $\nu \in I^k$.  We let $\partial_j$ denote the partial derivative for the $j$-th coordinate on $I^k$, for $1 \le j \le k$.  For the operations on the Hochschild cochains, we write $b^{(\nu)}$, $\delta^{(\nu)}$, $\bop^{(\nu)}$, $\iota^{(\nu)}$, and so on when we want to indicate which algebra structure on $A$ we use.

By the associativity of $m_\nu$, we know that $\gamma_\nu = m_\nu - m_0$ is an element of $\nC^2(A, A)$, called \textit{the Maurer--Cartan element for $m_\nu$ with respect to $m_0$}.   This satisfies the following \textit{Maurer--Cartan equation}
\begin{equation}
\label{eq:Maurer-Cartan}
\delta^{(0)}(\gamma_\nu) + \frac{1}{2}[\gamma_\nu, \gamma_\nu]_G = 0.
\end{equation}
We can also take a different origin other than $0$ to define $\gamma_\nu$.  The (partial) derivatives such as $\partial_j \gamma_\nu$ do not depend on the choice of origin.

One of the crucial consequences of~\eqref{eq:Maurer-Cartan} is $\delta^{(0)}(\partial_j \gamma_0) = 0$ for $1 \le j \le k$, which can be obtained by applying $\partial_j$ on the both hand sides of~\eqref{eq:Maurer-Cartan} and evaluating at $\nu = 0$.  Similarly, choosing other origin, we obtain that
\begin{equation}
\label{eq:der-of-MC-is-Hochs-cocycle}
\delta^{(\nu)}(\partial_j \gamma_\nu) = 0
\end{equation}
for any $\nu$ and $j$.

Then, the formula
\[
\nabla(f) = \sum_j \left ( \partial_jf + \iota^{(\nu)}_{\partial_j \gamma_\nu} f \right ) d \nu_j
\]
defines a connection $\nabla$ on the periodic cyclic complex of the algebras $(A_\nu)_\nu$ called the Gauss--Manin connection~\cite{MR1261901}*{Section~3}.  The relation
\[
[b - B, \partial_j] = - \Lop_{\partial_j \gamma_\nu}
\]
and the Cartan homotopy formula for $\partial_j \gamma_\nu$ shows that this operator commutes with $b - B$.  Hence it defines a connection on the periodic cyclic homology group.

The monodromy of this connection defines a canonical isomorphism of the periodic cyclic homology groups under the formal deformation quantization of symplectic manifolds~\cite{MR1350407}*{Appendix~2}.  In general, there is a issue of convergence to define the monodromy operator on the whole periodic cyclic complex, see for example~\cite{MR1667686}*{Section~8}.

The Gauss--Manin connection has the following significance in relation to the evaluation map $\ev_*\colon \HP_*(A_{I^k}) \rightarrow (\HP_*(A_\nu))_{\nu \in I^k}$.

\begin{prop}
The sections in the image of $\ev_*$ are flat with respect to the Gauss--Manin connection.
\end{prop}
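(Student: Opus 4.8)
The plan is to show that if $x \in \nC_*(A_{I^k})$ is a cycle for $b^{A_{I^k}} - B^{A_{I^k}}$ representing a class $\xi \in \HP_*(A_{I^k})$, then the family $s\colon \nu \mapsto [\ev_\nu(x)]$ satisfies $\nabla s = 0$. Since each evaluation $\ev_\nu\colon A_{I^k} \to A_\nu$ is an algebra homomorphism, the induced map on chains commutes with $b - B$; hence $\ev_\nu(x)$ is a $(b^{(\nu)} - B)$-cycle for every $\nu$, the section $s$ is well defined, and it suffices to prove that each component
\[
\nabla_j \ev(x) = \partial_j \ev_\nu(x) + \iota^{(\nu)}_{\partial_j \gamma_\nu} \ev_\nu(x)
\]
is a $(b^{(\nu)} - B)$-boundary. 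I expect the naive route---differentiating the fibrewise cycle equation $(b^{(\nu)} - B)\ev_\nu(x) = 0$ and invoking~\eqref{eq:cartan-homotopy-formula} for $\partial_j \gamma_\nu$---to fall short: the relation $[b-B,\partial_j] = -\Lop_{\partial_j\gamma_\nu}$ together with the Cartan formula only shows that $\nabla_j \ev(x)$ is again a $(b^{(\nu)}-B)$-cycle, as it must be since $\nabla_j$ descends to homology, but not that it is exact.

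The key step is therefore to produce the primitive by working on the global algebra $A_{I^k}$ rather than fibrewise. Let $\theta_j \in \nC^1(A_{I^k}, A_{I^k})$ be the coefficientwise derivative $f \otimes a \mapsto (\partial_j f) \otimes a$, a normalized $1$-cochain. Because the product of $A_{I^k}$ is the $\nu$-dependent product $m_\nu$, the operator $\theta_j$ fails to be a derivation precisely by the amount of the Maurer--Cartan derivative; concretely I will verify that its Hochschild coboundary is $\delta \theta_j = -\partial_j \gamma$, where $\partial_j \gamma$ denotes the global $2$-cochain evaluating to $\partial_j \gamma_\nu$ at each $\nu$. Applying the Cartan homotopy formula~\eqref{eq:cartan-homotopy-formula} to $D = \theta_j$ on $\nC_*(A_{I^k})$ then gives
\[
[b^{A_{I^k}} - B^{A_{I^k}}, \iota_{\theta_j}] = \Lop_{\theta_j} + \iota_{\partial_j \gamma},
\]
and evaluating on the global cycle $x$, so that $\iota_{\theta_j}(b^{A_{I^k}} - B^{A_{I^k}})x = 0$, yields $\Lop_{\theta_j} x + \iota_{\partial_j \gamma} x = (b^{A_{I^k}} - B^{A_{I^k}}) \iota_{\theta_j} x$.

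It then remains to push this identity down to the fibre through $\ev_\nu$, using three compatibilities: $\ev_\nu$ is a chain map for $b - B$; $\ev_\nu \Lop_{\theta_j} x = \partial_j \ev_\nu(x)$, which I will check by a direct comparison, since applying $\theta_j$ in a tensor slot and then evaluating agrees with differentiating the evaluated slot; and $\ev_\nu \iota_{\partial_j \gamma} x = \iota^{(\nu)}_{\partial_j \gamma_\nu} \ev_\nu(x)$, the naturality of the interior product for the homomorphism $\ev_\nu$, valid for $\partial_j\gamma$ because---unlike $\theta_j$---it is an honest $\ev$-compatible cochain that does not differentiate coefficients. Combining these, the displayed identity becomes
\[
\nabla_j \ev(x) = \partial_j \ev_\nu(x) + \iota^{(\nu)}_{\partial_j \gamma_\nu} \ev_\nu(x) = (b^{(\nu)} - B)\, \ev_\nu(\iota_{\theta_j} x),
\]
exhibiting $\nabla_j \ev(x)$ as the boundary of the smooth family $\nu \mapsto \ev_\nu(\iota_{\theta_j} x)$; hence $s$ is flat.

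The main obstacle is the middle step: one must recognize that flatness is not a fibrewise phenomenon---the fibrewise computation produces a cycle, not a boundary---and instead set up the cochain $\theta_j$ on $A_{I^k}$ with $\delta\theta_j = -\partial_j\gamma$, bridging the global algebra and the fibre operators through $\ev_\nu$. Once this is in place, a single application of Cartan's formula supplies the primitive; no exponentiation of operators is involved, so the convergence difficulties attached to the monodromy itself do not enter here.
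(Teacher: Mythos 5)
Your proposal is correct and is essentially the paper's own argument: the paper likewise takes the global $1$-cochain $D = \partial_j$ on $A_{I^k}$, notes $\delta(D) = -\partial_j \gamma_\nu$ while $\Lop_D$ is the canonical extension of $\partial_j$, and applies the Cartan homotopy formula~\eqref{eq:cartan-homotopy-formula} to write the connection operator as the commutator $[b - B, \iota_D]$ on $\CC_*(A_{I^k})$, hence zero on $\HP_*(A_{I^k})$. Your extra verifications (that $\delta\theta_j = -\partial_j\gamma$, and that evaluation intertwines $\Lop_{\theta_j}$ with $\partial_j$ and $\iota_{\partial_j\gamma}$ with $\iota^{(\nu)}_{\partial_j\gamma_\nu}$) only spell out compatibilities the paper leaves implicit.
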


\begin{proof}
Indeed, the operator $D = \partial_j$ on $A_{I^k}$ satisfies $\delta(D) = - \partial_j \gamma_\nu$.  Meanwhile $\Lop_{D}$ is the canonical extension of $\partial_j$ to $C_*(A_{I^k})$.  Hence~\eqref{eq:cartan-homotopy-formula} implies
\[
\partial_j + \iota_{\partial_j \gamma_\nu} = [b - B, \iota_{D}]
\]
as an equality between operators acting on $\CC_*(A_{I^k}) = \nbBcplx(A_{I^k})$.  It follows that the connection operator vanishes on $\HP_*(A)$.
\end{proof}

\begin{cor}
 Let $e$ be a projection of $A_{I^k}$.  Then the Chern character of $e$ gives a section of $(\HP_0(A_\nu))_{\nu \in I^k}$ which is flat with respect to the Gauss--Manin connection.
\end{cor}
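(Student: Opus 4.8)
The plan is to recognize the proposed family of Chern characters as the pushforward of a single class along the evaluation map $\ev_*$, and then invoke the preceding Proposition. First I would note that for each $\nu \in I^k$ the evaluation $\ev_\nu \colon A_{I^k} \to A_\nu$ is an algebra homomorphism, since by construction $A_{I^k} = C^\infty(I^k) \otimes A$ carries the pointwise product. Consequently $e_\nu := \ev_\nu(e)$ is again a projection: $e_\nu^2 = \ev_\nu(e^2) = \ev_\nu(e) = e_\nu$. (If $e$ is a projection in a matrix amplification $M_r(A_{I^k})$, the same holds after replacing $\ev_\nu$ by $\ev_\nu \otimes \id_{M_r}$.) Thus each $e_\nu$ has a well-defined Chern character $\ch(e_\nu) \in \HP_0(A_\nu)$, and $\nu \mapsto \ch(e_\nu)$ is the section we must show is flat.

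Next I would appeal to the naturality of the Chern character with respect to algebra homomorphisms. Applied to $\ev_\nu$ this gives $\ev_{\nu, *}(\ch(e)) = \ch(\ev_\nu(e)) = \ch(e_\nu)$ in $\HP_0(A_\nu)$. Hence the family $(\ch(e_\nu))_{\nu \in I^k}$ is exactly the image of the single class $\ch(e) \in \HP_0(A_{I^k})$ under $\ev_* = (\ev_{\nu, *})_{\nu}$. The Proposition asserts that any section in the image of $\ev_*$ is flat for the Gauss--Manin connection, so the claim follows immediately.

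The only step that demands care is the naturality identity, which I would verify at the chain level rather than citing it abstractly. The Chern character of a projection is represented in the normalized $(b, B)$-bicomplex $\nbBcplx(A_{I^k})$ by the standard idempotent formula, an expression built solely from the product of $A_{I^k}$ and the element $e$; since $\ev_\nu$ is multiplicative, the chain map it induces carries this representative term by term to the corresponding representative for $e_\nu$. Therefore the square relating evaluation on cyclic chains to evaluation on projections commutes on the nose, and passing to $\HP_0$ yields the displayed identity. I expect no genuine obstacle beyond keeping track of the normalization constants in the idempotent Chern character, which play no role in the functoriality argument.
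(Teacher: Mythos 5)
Your proposal is correct and follows exactly the route the paper intends: the corollary is stated without proof precisely because, as you observe, naturality of the Chern character identifies the family $(\ch(e_\nu))_\nu$ with $\ev_*(\ch(e))$, so that the preceding proposition on flatness of sections in the image of $\ev_*$ applies immediately. Your chain-level verification of naturality is a sound (if optional) addition, and no gap remains.
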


\subsection{Fell bundle and its \texorpdfstring{$2$}{2}-cocycle deformation}

Let $\Gamma$ be a discrete group.  A \textit{Fell bundle over $\Gamma$} is given by an algebra $A$ together with a $\Gamma$-grading $A = \bigoplus_{g \in \Gamma}$.  Here, we assume the compatibility condition $1 \in A_e$ and $A_g A_h \subset A_{g h}$ between the grading and the algebra structure.  Example of such algebras include group algebra of $\Gamma$, or the crossed product of $\Gamma$ with a $\Gamma$-algebra, and the algebras with torus action for $\Gamma = \Z^n$.

Let $\omega$ be a normalized $\U(1)$-valued $2$-cocycle on $\Gamma$.  The $2$-cocycle property of $\omega$ is given by the equation
\[
\omega(g, h) \omega(g h, k) = \omega(g, h k) \omega(h, k).
\]
The normalization condition on $\omega$ is give by
\[
\omega(g, e) = \omega(e, g) = \omega(g, g^{-1}) = 1.
\]
Any $2$-cocycle is cohomologous to a normalized cocycle.  In the following, we only consider the normalized ones.

The \textit{$\omega$-deformation} $A_\omega$ of $A$ is the algebra with the same underlying linear space as $A$, but endowed with a twisted product
\[
x *_\omega y = \omega(g, h) x \cdot_A y \quad (x \in A_g, y \in A_h).
\]
When $a$ is an element of $A$, we let $a^{(\omega)}$ denote the corresponding element of $A_\omega$.

If $A$ has an antilinear involution $*$, the deformed algebra $A_\omega$ also becomes a $*$-algebra by the same map $*$.  This construction generalizes both the twisted crossed product of $\Gamma$-algebras and the $\theta$-deformation of $T^n$-algebras.

\section{Algebra structure on group cyclic cohomology}
\label{sec:alg-str-on-cyc-coh-of-grp}

\begin{prop}[cf.~\cite{MR913964}*{Lemme~5.18}]
\label{prop:nat-trans-chm-degr-lower-triv}
Let $n > r$ be nonnegative integers, and
\[
\phi_{\Gamma_0, \Gamma_1}\colon H^{n}(\Gamma_0 \times \Gamma_1) \rightarrow H^{r}(\Gamma_0 \times \Gamma_1)
\]
be a natural transformation of functors on the direct product of the category of discrete groups with itself.  Then $\phi$ is zero.
\end{prop}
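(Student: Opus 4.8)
The plan is to treat both sides as two-variable functors built from group cohomology with complex coefficients, and to pin down $\phi$ by a weight/degree obstruction together with a representability bridge. First I would record the K\"unneth decomposition $H^{n}(\Gamma_0 \times \Gamma_1) \cong \bigoplus_{p+q=n} H^{p}(\Gamma_0) \otimes H^{q}(\Gamma_1)$ (valid since the coefficients form the field $\C$), and likewise for the target in degree $r$. This displays $\phi$ as a matrix of bidegree components $H^{p}(\Gamma_0) \otimes H^{q}(\Gamma_1) \to H^{s}(\Gamma_0) \otimes H^{t}(\Gamma_1)$ with $p+q=n$ and $s+t=r$, which is the natural bookkeeping for the whole argument.

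The computational heart is a scaling argument on free abelian test groups. For $\Gamma_0 = \Z^{k}$ the endomorphism ``multiplication by $\lambda \in \Z$'' induces multiplication by $\lambda^{p}$ on $H^{p}(\Z^{k})$, because $H^{*}(\Z^{k};\C)$ is the exterior algebra on $k$ degree-one generators. Using independent scalars $\lambda$ on $\Gamma_0=\Z^{k}$ and $\mu$ on $\Gamma_1=\Z^{l}$ and invoking naturality, any nonzero $(s,t)$-component would force $\lambda^{p}\mu^{q}=\lambda^{s}\mu^{t}$ for all $\lambda,\mu$, i.e.\ $(s,t)=(p,q)$; so $\phi$ must preserve bidegree on free abelian inputs. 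Since $p+q=n>r=s+t$, no admissible bidegree-preserving component exists, and $\phi$ vanishes whenever $\Gamma_0,\Gamma_1$ are free abelian. If $\phi$ is only a transformation of set-valued functors, I would first extract additivity on the abelian test groups from naturality under their multiplication maps $\Gamma\times\Gamma\to\Gamma$, and then run the same scaling computation.

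The remaining, genuinely delicate step is to pass from free abelian test groups to arbitrary $\Gamma_0,\Gamma_1$; this cannot be done by restriction alone, since there are classes (e.g.\ the degree-two class on an integral Heisenberg group) that are not pulled back from any torus, so vanishing on tori does not formally imply vanishing. I would resolve this by representability: $H^{n}(-;\C)$ is represented in the homotopy category by the Eilenberg--MacLane object $K(\C,n)$, and such a natural transformation is determined by the image of the universal class $\iota_n\in H^{n}(K(\C,n);\C)$. The bridge from discrete groups to the non-aspherical $K(\C,n)$ is supplied by the bar construction, which presents $K(\C,n)$ as the classifying space of a simplicial \emph{discrete} abelian group (levelwise a finite sum of copies of $\C$); applying $\phi$ levelwise and using naturality lets one evaluate on $\iota_n$. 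The obstruction then reduces to $H^{r}(K(\C,n);\C)=0$ for $0<r<n$, which is exactly the weight argument above: the $\C^{\times}$-action scaling the coefficient gives $\iota_n$ weight one, so $H^{*}(K(\C,n);\C)$, being free graded-commutative on $\iota_n$, is concentrated in degrees divisible by $n$.

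I expect the main obstacle to be precisely this bridge, namely making the levelwise extension of $\phi$ over the simplicial-group model interact correctly with the total complex computing $H^{*}(K(\C,n);\C)$, so that the Yoneda-type identification $\phi\leftrightarrow\phi(\iota_n)$ is legitimate; the two-variable version is handled by the analogous external universal object (or by the two-variable scaling computation already carried out). Finally, the degenerate case $r=0$ must be excluded separately, since $H^{0}(K(\C,n);\C)=\C$ admits the constant operations: naturality under the inclusion of the trivial group kills the constant, after which $n>r$ forces $\phi=0$ in every case.
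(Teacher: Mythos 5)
Your K\"{u}nneth bookkeeping and the scaling computation on $\Z^k$ are fine, but they only yield vanishing on free abelian test groups, and --- as you yourself flag --- the entire proof then hangs on the representability ``bridge'', which as sketched does not work. The functors $H^n(-)$ and $H^r(-)$ in the proposition are defined on the category of discrete groups only, so $\phi$ is \emph{not} an unstable cohomology operation on spaces, and the Yoneda-type identification $\phi \leftrightarrow \phi(\iota_n)$ is exactly the missing content rather than a consequence of representability. Concretely, let $G_\bullet$ be a simplicial discrete abelian group whose classifying space realizes $K(\C,n)$ (say the $\overline{W}$-construction on the Dold--Kan model with $\C$ in degree $n-1$). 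The levelwise maps $\phi_{G_p}\colon H^n(G_p) \rightarrow H^r(G_p)$ act only on the rows $q=n$ and $q=r$ of the realization spectral sequence $E_1^{p,q} = H^q(BG_p)$ converging to $H^{p+q}$ of the realization, and they need not commute with the higher differentials; worse, the universal class $\iota_n$ is not carried by those rows at all: in the standard model it is detected in bidegree $(p,q)=(n-1,1)$, via $H^1(G_{n-1};\C)=\Hom(\C,\C)$, i.e.\ in group-cohomological degree $1 < n$, where a natural transformation defined only in the single degree $n$ carries no information. So ``applying $\phi$ levelwise and evaluating on $\iota_n$'' does not even typecheck, and your closing paragraph concedes this step is unresolved; the argument is therefore incomplete at its decisive point. (Your fallback of restricting to tori is blocked by the very Heisenberg-type example you mention.)

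The paper sidesteps spaces entirely with the Kan--Thurston theorem, which is precisely the device your bridge was meant to supply. For each component $H^n(\Gamma_0 \times \Gamma_1) \rightarrow H^s(\Gamma_0)\otimes H^t(\Gamma_1)$ of the target, with $s+t=r$, one chooses discrete groups $\Lambda(\Gamma_0,s)$ and $\Lambda(\Gamma_1,t)$ together with maps $B\Lambda(\Gamma_i,k) \rightarrow B\Gamma_i^{(k)}$ inducing cohomology isomorphisms with the skeleta. Then $H^n(\Lambda(\Gamma_0,s)\times\Lambda(\Gamma_1,t))=0$ by the K\"{u}nneth formula, since every bidegree $(a,b)$ with $a+b=n>s+t$ has $a>s$ or $b>t$ and a $k$-dimensional complex has no cohomology above degree $k$; meanwhile $H^s(\Gamma_0)\otimes H^t(\Gamma_1)$ injects into $H^r$ of that product through the group homomorphisms $\Lambda(\Gamma_i,\cdot)\rightarrow\Gamma_i$, so naturality forces each $(s,t)$-component of $\phi_{\Gamma_0,\Gamma_1}$ to vanish --- no scaling argument and no universal class needed. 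If you want to rescue your outline, replace $K(\C,n)$ by the Kan--Thurston groups of the skeleta: that converts your space-level truncation idea into an argument that stays inside the category of groups, where the naturality of $\phi$ actually applies.
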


\begin{proof}
When $\Gamma$ is a discrete group, we represent $\clsB \Gamma$ by a cube complex and denote by $\clsB \Gamma^{(k)}$ its $k$-skeleton for $k \in \N$.  By the Kan--Thurston theorem~\cite{MR0413089}, there is a discrete group $\Lambda(\Gamma, k)$ and a continuous map $\clsB \Lambda(\Gamma, k) \rightarrow \clsB \Gamma^{(k)}$ which induces an isomorphism in cohomology.

Let $s$ and $t$ be nonnegative integers satisfying $s + t = n$.  On one hand, $H^s(\Gamma_0) \rightarrow H^s\big (\clsB \Gamma_0^{(s)}\big) = H^s(\clsB \Lambda(\Gamma_0, s))$ is injective, and on the other hand it is induced by a group homomorphism $\Lambda(\Gamma_0, s) \rightarrow \Gamma_0$.  It follows that we have a commutative diagram
\[
\xymatrix@C=6em{
H^n(\Gamma_0 \times \Gamma_1) \ar[r]^{\phi_{\Gamma_0, \Gamma_1}} \ar[d] & H^{r}(\Gamma_0 \times \Gamma_1) \ar[d]\\
H^n(\Lambda(\Gamma_0, s) \times \Lambda(\Gamma_1, t)) \ar[r]^{\phi_{\Lambda(\Gamma_0, s), \Lambda(\Gamma_1, t)}} & H^{r}(\Lambda(\Gamma_0, s) \times \Lambda(\Gamma_1, t)).
}
\]
By the K\"{u}nneth formula, the group
\[
H^n(\Lambda(\Gamma_0, s) \times \Lambda(\Gamma_1, t)) = \bigoplus_{a + b = n} H^a\left(\clsB \Gamma_0^{(s)}\right) \otimes H^b\left(\clsB \Gamma_1^{(t)}\right)
\]
has to be trivial because we either have $a > s$ or $b > t$.

Since we have $H^{r}(\Lambda(\Gamma_0, s) \times \Lambda(\Gamma_1, t)) = H^s(\Gamma_0) \otimes H^t(\Gamma_1)$, the above argument shows that the $(s, t)$-component $H^n(\Gamma_0 \times \Gamma_1) \rightarrow H^s(\Gamma_0) \otimes H^t(\Gamma_1)$ of $\phi_{\Gamma_0, \Gamma_1}$ is trivial.  Because $(s, t)$ was chosen arbitrarily, $\phi_{\Gamma_0, \Gamma_1}$ has to be also trivial.
\end{proof}

\begin{prop}
The external product on the groups $H^*(\Gamma; \C) = \Ext_{\Gamma}^*(\C, \C)$ and the external cup product on the ones on $\HC^*(\clsB_*\Gamma)$ coincide.
\end{prop}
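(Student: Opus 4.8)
The plan is to compare the two products through Karoubi's decomposition $\HC^n(\clsB_*\Gamma) \simeq H^n(\Gamma) \oplus H^{n-2}(\Gamma) \oplus \cdots$ and to show that, for classes $\phi \in H^m(\Gamma_0)$ and $\psi \in H^n(\Gamma_1)$ placed in the top Karoubi summands, the external cup product $\phi \cup \psi \in \HC^{m+n}(\clsB_*(\Gamma_0 \times \Gamma_1))$ again lands in the top summand $H^{m+n}(\Gamma_0 \times \Gamma_1)$ and there equals the classical external product. Throughout I would use the identification $\clsB_*(\Gamma_0 \times \Gamma_1) = \clsB_*\Gamma_0 \times \clsB_*\Gamma_1$ of cyclic sets, so that the cup product constructed in the subsection on cup products applies directly with $C_* = \clsB_*\Gamma_0$ and $D_* = \clsB_*\Gamma_1$.

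First I would pin down the top component. The $b^t$-cohomology $\HH^*(\clsB_*\Gamma)$ is the group cohomology $H^*(\Gamma)$ computed from the bar resolution, and Karoubi's splitting is arranged so that the map $I$ is the projection of $\HC^n$ onto its $k=0$ summand, identified with $\HH^n(\clsB_*\Gamma) = H^n(\Gamma)$, and annihilates the lower summands (the image of $S$). Proposition~\ref{prop:I-is-hom-for-cup-and-vee}, whose proof is phrased at the level of the cyclic modules $C_*, D_*$ and hence applies verbatim here, then gives $I(\phi \cup \psi) = I(\phi) \vee I(\psi)$. Since the $\vee$-product is induced by $\sh^t$, which on the bar complexes is the Eilenberg--Zilber shuffle map computing the external product on $\Ext^*_\Gamma(\C,\C)$, the right-hand side is exactly the classical external product of $\phi$ and $\psi$. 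Thus the top component of $\phi \cup \psi$ is the external group product.

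Next I would rule out the lower components. For each $k \ge 1$, composing the cup product with the projection onto the summand $H^{m+n-2k}(\Gamma_0 \times \Gamma_1)$ gives a map $H^m(\Gamma_0) \otimes H^n(\Gamma_1) \to H^{m+n-2k}(\Gamma_0 \times \Gamma_1)$ that is natural in the pair $(\Gamma_0, \Gamma_1)$, because the shuffle maps, Karoubi's splitting and the $S$--$I$ sequence are all natural. Precomposing with the natural K\"unneth projection $H^{m+n}(\Gamma_0 \times \Gamma_1) \twoheadrightarrow H^m(\Gamma_0) \otimes H^n(\Gamma_1)$ produces a natural transformation $H^{m+n}(\Gamma_0 \times \Gamma_1) \to H^{m+n-2k}(\Gamma_0 \times \Gamma_1)$ strictly lowering degree; by Proposition~\ref{prop:nat-trans-chm-degr-lower-triv} it is zero, and since that projection is surjective the component itself vanishes. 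Hence $\phi \cup \psi$ has no components below the top, and together with the previous step it equals the external group product.

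I expect the main obstacle to be this vanishing of the lower Karoubi components: it is the one genuinely non-formal step specific to the cyclic setting, and it is precisely what the rigidity Proposition~\ref{prop:nat-trans-chm-degr-lower-triv} (through Kan--Thurston) is designed to supply. A secondary point demanding care is the identification of the Hochschild $\vee$-product on $\C[\clsB_*\Gamma]$ with the classical external product on $H^*(\Gamma)$, together with checking that the proof of Proposition~\ref{prop:I-is-hom-for-cup-and-vee} applies to the cyclic modules of the nerve and not only to unital algebras.
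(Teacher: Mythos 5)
Your proposal is correct and is essentially the paper's own argument: it identifies the top Karoubi summand via Proposition~\ref{prop:I-is-hom-for-cup-and-vee}, reading $I$ as the projection onto $H^{m+n}(\Gamma_0 \times \Gamma_1)$ and matching $\vee$ with the shuffle-induced external product on $\Ext^*_\Gamma(\C,\C)$, and it kills the summands $H^{m+n-2k}$ for $k>0$ by precomposing with the surjective K\"unneth projection and applying the Kan--Thurston rigidity of Proposition~\ref{prop:nat-trans-chm-degr-lower-triv}. The only deviations are cosmetic (you treat the top component before the lower ones, and you make explicit the surjectivity point and the applicability of Proposition~\ref{prop:I-is-hom-for-cup-and-vee} to arbitrary cyclic modules, both of which the paper leaves implicit).
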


\begin{proof}
Let $m, n$ be integers, and consider the composition
\begin{multline}
\label{eq:prod-map-from-cyclic-cup}
H^m(\Gamma_0) \otimes H^n(\Gamma_1) \rightarrow \HC^{m}(\clsB_*\Gamma_0) \otimes \HC^{n}(\clsB_*\Gamma_1) \\
\rightarrow \HC^{m+n}(\clsB_* (\Gamma_0 \times \Gamma_1)) = \bigoplus_{k=0}^{\lfloor (m+n)/2 \rfloor}H^{m+n-2k}(\Gamma_0 \times \Gamma_1)
\end{multline}
where the first map is the tensor product of Karoubi's embedding and the middle map is the cup product in cyclic cohomology.

The composition of this map with the projection $H^{m+n}(\Gamma_0 \times \Gamma_1) \rightarrow H^m(\Gamma_0) \otimes H^n(\Gamma_1)$ is a natural transformation in $(\Gamma_0, \Gamma_1)$.  By Proposition~\ref{prop:nat-trans-chm-degr-lower-triv}, the component of $H^{m+n-2k}(\Gamma_0 \times \Gamma_1)$ in~\eqref{eq:prod-map-from-cyclic-cup} is trivial if $k > 0$.

It remains to identify the map $H^m(\Gamma_0) \otimes H^n(\Gamma_1) \rightarrow H^{m+n}(\Gamma_0 \times \Gamma_1)$ induced by~\eqref{eq:prod-map-from-cyclic-cup}.  By definition, the projection $\HC^k(\clsB_*\Gamma) \rightarrow H^k(\Gamma)$ can be identified with $I\colon \HC^k(\clsB_*\Gamma) \rightarrow \HH^k(\clsB_*\Gamma)$.  Thus, by Proposition~\ref{prop:I-is-hom-for-cup-and-vee}, the external cup product of cyclic cohomology is intertwined to the $\vee$-product of $\HH^*$.

Finally, by taking the standard resolution $\C[\Gamma]$ as a $\Gamma$-bimodule, we can see that the $\vee$-product on $\HH^*$ and the external product on $\Ext_{\Gamma}^*(\C, \C)$ coincide.
\end{proof}

The coproduct homomorphism $\Delta\colon\Gamma \rightarrow \Gamma \times \Gamma$ induces a morphism of cyclic sets $\clsB_*(\Gamma) \rightarrow \clsB_*(\Gamma \times \Gamma)$.  Now, the $(b, B)$-bicomplex of $\clsB_*(\Gamma \times \Gamma)$ can be identified with the mixed complex $\clsB_*(\Gamma) \times \clsB_*(\Gamma)$.  Hence the composition of the cup product
\[
\cup\colon \HC^*(\clsB_*\Gamma) \times \HC^*(\clsB_*\Gamma) \rightarrow \HC^*(\clsB_*(\Gamma \times \Gamma))
\]
and the pullback by $\Delta$ defines an associative product structure on $\HC^*(\clsB_*\Gamma)$, which we shall call the \textit{internal cup product}.  Since $\Delta$ is invariant under the flip map, this cup product is graded commutative.

The above algebra structure can be extended to $\HC^*(\C[\Gamma])$ in a straightforward way.

\begin{cor}
The internal cup product on $\HC^*(\clsB_*\Gamma)$ and the cup product on $H^*(\Gamma)$ coincide.
\end{cor}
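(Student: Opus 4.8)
The plan is to reduce the statement to the preceding Proposition on external products, exploiting the fact that both cup products in question are obtained from their respective external products by pulling back along the coproduct $\Delta\colon \Gamma \rightarrow \Gamma \times \Gamma$, together with the naturality of Karoubi's embedding.

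First I would recall that the internal cup product on $\HC^*(\clsB_*\Gamma)$ is, by its very definition, the composition of the external cyclic cup product $\HC^*(\clsB_*\Gamma) \otimes \HC^*(\clsB_*\Gamma) \rightarrow \HC^*(\clsB_*(\Gamma \times \Gamma))$ with the pullback $\Delta^*$ along the morphism of cyclic sets induced by $\Delta$. On the group cohomology side, the ordinary cup product on $H^*(\Gamma)$ admits exactly the same description: it is the external product $H^m(\Gamma) \otimes H^n(\Gamma) \rightarrow H^{m+n}(\Gamma \times \Gamma)$ followed by $\Delta^*$. Thus it suffices to check that Karoubi's embedding intertwines the two external products and commutes with the two instances of $\Delta^*$.

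Next, taking $\phi \in H^m(\Gamma)$ and $\psi \in H^n(\Gamma)$ with Karoubi images $\tilde\phi, \tilde\psi$, the preceding Proposition tells us that the external cyclic cup product $\tilde\phi \cup \tilde\psi$ lands in the top summand $H^{m+n}(\Gamma \times \Gamma)$ of $\HC^{m+n}(\clsB_*(\Gamma \times \Gamma))$ (the higher summands vanishing by Proposition~\ref{prop:nat-trans-chm-degr-lower-triv}) and coincides there with the Karoubi image of the external group cohomology product of $\phi$ and $\psi$. Applying $\Delta^*$ and invoking naturality of Karoubi's embedding with respect to the group homomorphism $\Delta$ would then give that $\Delta^*(\tilde\phi \cup \tilde\psi)$ equals the Karoubi image of $\Delta^*$ applied to the external group product, and the latter is by definition the cup product $\phi \cup \psi$ in $H^*(\Gamma)$. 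This yields the asserted identity.

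The main obstacle is establishing the naturality invoked in the last step: for a group homomorphism $f\colon \Gamma \rightarrow \Gamma'$, the induced pullback $f^*$ on cyclic cohomology must carry the Karoubi summand $H^*(\Gamma') \subset \HC^*(\clsB_*\Gamma')$ into $H^*(\Gamma)$ compatibly with $f^*$ on group cohomology. I expect this to follow from functoriality of the nerve construction $\clsB_*$ as a functor from discrete groups to cyclic sets, combined with the observation that $f^*$ is a morphism of the associated $SBI$-sequences and therefore commutes with the projection $I\colon \HC^* \rightarrow \HH^* = H^*$ onto the top Karoubi summand (cf. the identification used in Proposition~\ref{prop:I-is-hom-for-cup-and-vee}). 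Since the Karoubi splitting is characterized degree by degree through $I$, its naturality is then automatic, and the remaining bookkeeping is purely formal.
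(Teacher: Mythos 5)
Your proposal is correct and follows essentially the route the paper intends: the corollary is immediate from the preceding proposition on external products once one notes that both internal products are defined as external product followed by $\Delta^*$, and that Karoubi's embedding is natural in $\Gamma$ (via functoriality of $\clsB_*$ and naturality of the $SBI$-maps). Your filling-in of the naturality step is sound and matches the implicit argument in the paper.
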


\begin{rmk}
We also note that if we embed $H^m(\Gamma_0)$ in $\HC^{m+2k}(\clsB_*\Gamma_0)$ and $H^n(\Gamma_1)$ in $\HC^{n + 2l}(\clsB_*\Gamma_1)$, their cyclic cohomology cup product
\[
\tilde{\phi} \cup \tilde{\psi} \in \HC^{m + n + 2(k+ l)}(\clsB_*(\Gamma_0 \times \Gamma_1))
\]
is represented by the image $\widetilde{\phi \cup \psi}$ of group cohomology cup product.  In order to see this, the $S$-operator $\HC^a(A) \rightarrow \HC^{a+2}(A)$ agrees with the map $\phi \mapsto v \cup \phi$, where $v \in \HC^2(\C)$ is given by the cyclic $2$-cocycle on $\C$ characterized by $v(1, 1, 1) = -1/2$~\cite{MR1600246}*{Section~4.4.10}.  By the associativity and the graded commutativity of the cup product, we have
\[
S(\phi \cup \psi) = S(\phi) \cup \psi = \phi \cup S(\psi).
\]
On the other hand, the $S$-operator on $\HC^*(\clsB_*\Gamma)$ is the collection of embeddings $H^k(\Gamma) \oplus H^{k-2}(\Gamma) \oplus \cdots$ into $H^{k+2}(\Gamma) \oplus H^{k}(\Gamma) \oplus \cdots$ which is the identity map on each direct summand.  It follows that $\HP^*(\C[\Gamma])$ has an algebra structure by cup product and $H^*(\Gamma)$ becomes its subalgebra by Karoubi's inclusion.
\end{rmk}

\section{Deformation of cocycles on Fell bundles}

Throughout this section $A = \bigoplus_{g \in \Gamma} A_g$ denotes a $\Gamma$-graded algebra.  We let $\alpha\colon A \rightarrow A \otimes \C[\Gamma]$ be the coaction of the algebraic compact quantum group $(\C[\Gamma], \Delta)$ corresponding to the grading on $A$.

\subsection{Action of group cohomology}

Using the coaction $\alpha$, we can define the action of $\HC^*(\C[\Gamma])$ on $\HC^*(A)$ by
\[
\phi \cdot \omega = \alpha^\#(\phi \cup \omega)\quad(\omega \in \HC^*(\C[\Gamma]), \phi \in \HC^*(A)).
\]
This defines an $\HC^*(\C[\Gamma])$-module structure on $\HC^*(A)$ with respect to the algebra structure on $\HC^*(\C[\Gamma])$ of Section~\ref{sec:alg-str-on-cyc-coh-of-grp}.

A cyclic $n$-cocycle $\phi$ on $A$ is said to be \textit{$\alpha$-invariant} if it satisfies
\[
\phi(f^0, \ldots, f^n) = 0 \quad (f^j \in A_{g_j}, g_0 \cdots g_n \neq e).
\]
Such cocycles are the main subject of our study.  We note that the standard trace $\tau \in \HC^0(\C[\Gamma])$ acts as the idempotent whose image is the $\alpha$-invariant classes of $\HC^*(A)$.  By means of the algebra inclusion $H^*(\Gamma; \C) \rightarrow \HC^*(\C[\Gamma])$, we have an action of group cocycles on the $\alpha$-invariant part of $\HC^*(A)$.

\subsection{Deformation of cyclic cocycles}

The $\omega$-deformation $A_\omega$ admits a homomorphism
\begin{equation*}
% \label{eq:deform-hom-into-orig-otimes-twist-grp-alg}
\alpha_\omega\colon A_\omega \rightarrow A \otimes \C[\Gamma]_\omega, \quad a^{(\omega)} \mapsto a \otimes \lambda^{(\omega)}_g \quad (a \in A_g).
\end{equation*}

Let $\tau$ be the standard trace on $\C[\Gamma]_\omega$.  When $\phi$ is an $\alpha$-invariant cyclic cocycle on $A$, the cup product of $\phi$ and $\tau$ is a cyclic cocycle on $A \otimes \C[\Gamma]_\omega$.  By pulling back along the homomorphism $\alpha_\omega$, we obtain a cyclic cocycle on $A_\omega$ which we denote by $\phi^{(\omega)}$.

\begin{rmk}
There is an alternative definition of $\phi^{(\omega)}$.  Consider a cycle $(\Omega^*, \tau_\phi)$ over $A$ whose character is $\phi$.  We can choose $\Omega^*$ so that the coaction $\alpha$ extends to $\Omega^*$ in an equivariant way, such that the differential $d$ commutes with the coaction.  For example, we can take the universal DGA $\Omega^*(A)$ over $A$.  The $\alpha$-invariance of $\phi$ means that $\tau_\phi(\xi) = 0$ whenever $\xi$ is in the subspace $\Omega^n_g$ for $g \neq e$.

The $\omega$-deformation of $\Omega^*$ becomes a differential graded algebra with the same $d$.  Then, the linear map $\Omega_\omega^n \simeq \Omega^n \rightarrow \C$ is a closed graded trace on $\Omega^*_\omega$: the closedness is trivially satisfied by construction.  The graded trace property follows from the fact that
\[
\tau_\phi(\xi *_\omega \eta - \eta *_\omega \xi) = \omega(g, h) \tau_\phi(\xi \eta) - \omega(h, g) \tau_\phi(\eta \xi) \quad (\xi \in \Omega^*_g , \eta \in \Omega^*_{h})
\]
is zero if $h \neq g^{-1}$ by the $\alpha$-invariance, and is also zero when $h = g^{-1}$ by the cocycle property of $\omega$.  Thus, we obtain a cycle $(\Omega^*_\omega, \tau_{\phi})$ over $A_\omega$, and its character is equal to $\phi^{(\omega)}$;
\begin{equation}
\label{eq:dfn-twisted-cocycle}
\phi^{(\omega)}(f^0, \ldots, f^n) = \tau_\phi(f^0 *_\omega d f^1 *_\omega \cdots *_\omega d f^n).
\end{equation}
\end{rmk}

\subsection{Action of the Maurer--Cartan element}

Let $\omega_0$ be a normalized $\R$-valued $2$-cocycle on $\Gamma$, and put $\omega^t = e^{\sqrt{-1} t \omega_0}$.  Then we obtain a field of algebras $(A_{\omega^t})_{t \in I}$ over $I$.  When $\phi$ is an $\alpha$-invariant cyclic $n$-cocycle on $A$, $\phi^{(t)} = \phi^{(\omega^t)}$ defines a section of the bundle $(\HP^*(A_{\omega^t}))_{t \in I}$.  Pulling back via $\ev_*$ to $\HP^*(A_I)$, we obtain a $1$-parameter family of cyclic cocycles on $A_I$.  By the formula~\eqref{eq:dfn-twisted-cocycle}, each cocycle $\phi^{(t)}$ is rather easy to compute once we know $\phi$.  The cohomology classes $[\phi^{(t)}]$ need not be constant, and the variation can be measured by the monodromy operator of the Gauss--Manin connection.

Let $f^j$ ($j = 1, 2$) be elements respectively in the subspaces $A_{g_j}$ for $g_j \in \Gamma$.  Then the action of Maurer--Cartan element is given by
\begin{align*}
\gamma_t(f_1, f_2) &= (\omega^t(g_1, g_2) - 1) f_1 f_2,&
\partial_t \gamma_t(f_1, f_2) &= \sqrt{-1} \omega_0(g_1, g_2) f_1 f_2.
\end{align*}

For each $n$, consider the function $\omega_0^{(n)} \colon \Gamma^{n+1} \rightarrow \C$ defined by
\[
\omega_0^{(n)}(g_0, \ldots, g_n) = \sum_{j = 1}^n \omega_0(g_0 \cdots g_{j-1}, g_j).
\]
By the cocycle condition on $\omega_0$, we may take any way to parenthesize the product $g_0 \cdots g_n$ and still get the same value from a corresponding formula.  For example, we have the equality
\[
\omega_0^{(3)}(g_0, \ldots, g_3) = \omega_0(g_1, g_2) + \omega_0(g_1 g_2, g_3) + \omega_0(g_0, g_1 g_2 g_3)
\]
corresponding to the associativity $((x y) z) w = x ((y z) w)$.

When the elements $g_0, \ldots, g_n \in \Gamma$ satisfy $g_0 \cdots g_n = e$, we have 
\[
\sqrt{-1} \omega_0^{(n)}(g_0, \ldots, g_n) = \partial_t \tau \big (\lambda^{(\omega^t)}_{g_0} \cdots \lambda^{(\omega^t)}_{g_n} \big ).
\]
This implies the invariance under the cyclic permutation
\begin{equation}
\label{eq:omega-n-inv-under-cyc-perm}
\omega_0^{(n)}(g_0, \ldots, g_n) = \omega_0^{(n)}(g_n, g_0, g_1, \ldots, g_{n-1})
\end{equation}
when $g_0 \cdots g_n = e$.

We let $\omega_0^{(n)} \phi$ denote the functional on $A^{\otimes n}$ characterized by
\[
\omega_0^{(n)} \phi(f^0, \ldots, f^n) = \omega_0^{(n)}(g_0, \ldots, g_n) \phi(f^0, \ldots, f^n)
\]
when $f^j$ is a homogeneous element with spectrum $g_j$ for $0 \le j \le n$.  This satisfies the cyclicity condition by~\eqref{eq:omega-n-inv-under-cyc-perm}, and in fact is the derivative of $\phi^{(t)}$ at $t = 0$.

Since $\phi^{(t)}$ can be considered as a cochain in the normalized $(b, B)$-bicomplex of $A_{\omega^t}^+$, we may consider the action of $\iota_{\partial_t\gamma_t}$ on it.

Consider an infinitesimal perturbation $\delta t$ of the variable $t$.  If we pull back $\phi^{(\delta t)} \in \HP^*(A_{\delta t})$ to $\HP^*(A)$ by the infinitesimal monodromy of the Gauss--Manin connection, we obtain
\[
\phi + \delta t \left( \sqrt{-1} \omega_0^{(n)} \phi - \iota_{\partial_t \gamma_0} \phi \right ) + O(\delta t^2).
\]
It follows that the normalized $(b, B)$-cochain
\begin{equation*}
% \label{eq:int-prod-naive}
(\sqrt{-1} \omega_0^{(n)} - \iota_{\partial_t \gamma_0}) \phi
\end{equation*}
measures the non-flatness of the section $(\phi^{(t)})_t$ at $t = 0$.  Since we have $\Bop_{\partial_t \gamma} \phi = 0$, the cochain $- \iota_{\partial_t \gamma} \phi$ can be written as
\[
\psi = -\phi(\partial_t\gamma(f^{n+1}, f^{n+2})(f^0 + \lambda), f^1, \ldots, f^n), \quad A^+ \otimes A^{\otimes n+2} \rightarrow \C.
\]
Let us put $\psi_{n+2}$ and $\psi_{n + 3}$ the functionals on $A^{\otimes n + 2}$ and $A^{\otimes n + 3}$ corresponding to $\psi$.

It follows that the cochain $(\psi_{n+3}, \psi_{n+2}, \sqrt{-1} \omega_0^{(n)} \phi)$ in $\CC^*(A)$ is a cocycle by the above consideration, but this can be directly verified as follows.  As we already have the cyclicity of $\omega_0^{(n)} \phi$, it remains to show
\begin{gather}
\label{eq:b-B-int-prod-naive-1}
\sqrt{-1} b \omega_0^{(n)} \phi = N \psi_{n+2},\\
\label{eq:b-B-int-prod-naive-2}
b' \psi_{n+2} = (1 - \lambda_{n+2}) \psi_{n+3}, \\
\label{eq:b-B-int-prod-naive-3}
b \psi_{n+3} = 0.
\end{gather}
Firstly, taking the derivative for $t$ of the identity $b \phi_t = 0$ gives~\eqref{eq:b-B-int-prod-naive-1}.  Next, evaluating $b \phi = 0$ on
\[
(\partial_t \gamma(f^{n+1}, f^{n+2}), f^0, f^1, \ldots, f^n),
\]
and combining~\eqref{eq:der-of-MC-is-Hochs-cocycle}, we get~\eqref{eq:b-B-int-prod-naive-2}.  Similarly,~\eqref{eq:b-B-int-prod-naive-3} is a consequence of $b \phi = 0$ evaluated on
\[
(\partial_t \gamma(f^{n+2}, f^{n+3}) f^0, f^1, \ldots, f^{n+1})
\]
and~\eqref{eq:der-of-MC-is-Hochs-cocycle}.

Let us denote by $i_{\sqrt{-1} \omega_0} \phi$ the above cocycle in the cyclic bicomplex of $A$.  This way we obtain an operator $i_{\sqrt{-1} \omega_0}$ acting on the space of $\alpha$-invariant cyclic cocycles on $A_0$.

\begin{prop}
\label{prop:action-maurer-cartan-group-cocycle}
The cyclic cocycles $i_{\sqrt{-1} \omega_0} \phi$ and $\sqrt{-1} \phi \cdot \omega_0$ define the same class in $\HC^*(A)$.
\end{prop}

\begin{proof}
We show the equality between the both divided by $\sqrt{-1}$.  In order to simply the notation, let us denote the cyclic modules $C_*(A^+)$ and $C_*(\clsB \Gamma)$ by $C_*$ and $D_*$ respectively.

We let $\epsilon^\perp$ denote the projection $A^+ \rightarrow A$.  Let $\psi_{n+2}$ be the element of $(C \times D)_{n+2}'$ defined by
\begin{multline*}
\psi_{n+2}((f_0, \ldots, f_{n+2}) \otimes (g_1, \ldots, g_{n+2}))\\
 = - \omega_0(g_{n+1}, g_{n+2}) \phi(\epsilon^\perp(f_{n+1}) \epsilon^\perp(f_{n+2}) f_0, \epsilon^\perp(f_1), \ldots, \epsilon^\perp(f_n)),
\end{multline*}
for $f^j \in A^+$ and $g^j \in \Gamma$.  Similarly, let $\psi_n$ be the element of $(C \times D)_n'$ defined by
\[
\psi_n((f_0, \ldots, f_n) \otimes (g_1, \ldots, g_n)) = \omega_0^{(n)}(g_0, g_1, \ldots, g_n) \phi(\epsilon^\perp(f_0), \ldots, \epsilon^\perp(f_n)),
\]
where we put $g_0 = (g_1 \cdots g_n)^{-1}$.  These cochains satisfy the normalization condition on the part of $C$, and $\psi_{n+1}$ is also normalized for the last two variables on $D$.

Then we claim that $\psi = (\psi_{n+2}, \psi_n)$ is a cocycle in the dual mixed complex $(C \times D)'$.  Indeed, $b \psi_{n+2} = 0$ follows from
\begin{multline*}
- b \psi_{n+2}(f_0 \otimes g_0, \ldots, f_{n+3} \otimes g_{n+3}) \\
= \sum_{j=0}^{n - 1} (-1)^j \omega_0(g_{n+2}, g_{n+3}) \phi(f_{n+2} f_{n+3} f_0, \ldots, f_j f_{j+1}, \ldots, f_{n+1})\\
 + (-1)^n \omega_0(g_{n+1} g_{n+2}, g_{n+3}) \phi(f_{n+1} f_{n+2} f_{n+3} f_0, f_1, \ldots, f_n) \\
+ (-1)^{n+1} \omega_0(g_{n+1}, g_{n+2} g_{n+3}) \phi(f_{n+1} f_{n+2} f_{n+3} f_0, f_1, \ldots, f_n)\\
 + (-1)^{n+2} \omega_0(g_{n+1}, g_{n+2}) \phi(f_{n+1} f_{n+2} f_{n+3} f_0, f_1, \ldots, f_n) \\
= \omega_0(g_{n+2}, g_{n+3}) (b \phi)(f_{n+2} f_{n+3} f_0, \ldots, f_n, f_{n+1}) = 0
\end{multline*}
and $B \psi_n = 0$ is a consequence of $\epsilon^\perp(1) = 0$.

To see that the remaining equality $-B(\psi_{n+2}) + b(\psi_n) = 0$ holds, we observe
\begin{multline*}
b (\psi_n)(f_0 \otimes g_0, \ldots, f_{n+1} \otimes g_{n+1}) \\
= \sum_{j=0}^n (-1)^j \omega_0^{(n)}(g_0, \ldots, g_j g_{j+1}, \ldots, g_{n+1}) \phi(f_0, \ldots, f_j f_{j+1}, \ldots, f_{n+1})\\
 + (-1)^{n+1} \omega^{(n)}(g_{n+1} g_0, \ldots, g_n) \phi(f_{n+1} f_0, \ldots , f_n) \\
= \sum_{j=0}^n \big( \omega_0^{(n+1)}(g_0, \ldots, g_{n+1}) - \omega_0(g_j, g_{j+1}) \big ) \phi(f_0, \ldots, f_j f_{j+1}, \ldots, f_{n+1})\\
 + (-1)^{n+1} \big (  \omega_0^{(n+1)}(g_{n+1}, g_0, \ldots, g_n) - \omega_0(g_{n+1}, g_0) \big ) \phi(f_{n+1} f_0, \ldots, f_n).
\end{multline*}
By the cyclic invariance of $\omega_0^{(n+1)}$, this is equal to
\begin{multline*}
\omega_0^{(n+1)}(g_0, \ldots, g_{n+1}) (b \phi)(f_0, \ldots, f_{n+1}) \\
- \big ( \sum_{j=0}^n (-1)^j \omega_0(g_j, g_{j+1}) \phi(f_0, \ldots, f_j f_{j+1}, \ldots, f_{n+1})\\
 + (-1)^{n+1} \omega_0(g_{n+1}, g_0) \phi(f_{n+1} f_0, \ldots, f_n) \big ).
\end{multline*}
The cyclicity of $\phi$ implies
\[
(-1)^j  \phi(f_0, \ldots, f_j f_{j+1}, \ldots, f_{n+1}) = (-1)^{(n+1) j} \phi(f_j f_{j + 1}, f_{j+2}, \ldots, f_{n+1}, f_0, \ldots, f_{j-1}).
\]
Thus, we obtain that $b(\psi_n)$ is equal to
\[
- \sum_{j=0}^{n+1} (-1)^{(n+1)j} \omega_0(g_j, g_{j+1}) \phi(f_j f_{j+1},  f_{j+2}, \ldots, f_{j-1}),
\]
where we put $g^{n+2} = g^0$.  We can also compute
\[
B \psi_{n+2}(f_0, \ldots, f_{n+1}) = -\sum_{j=0}^{n+1} (-1)^{(n+1) j} \omega_0(g_j, g_{j+1}) \phi(f_j f_{j + 1}, \ldots, f_{n+1}, f_0, \ldots, f_{j-1}),
\]
which implies the desired equality $B \psi_{n+2} = b \psi_n$.

The image of the cocycle $\psi$ under $\Sh^t \colon (C \times D)' \rightarrow (C \otimes D)'$ can be computed as follows.

First, because $\omega_0$ is normalized, $\sh_{n, 2}^\#(\psi_{n+2})$ only contains the contribution from the $(n, 2)$-shuffle $(\ensemble{1, \ldots, n}, \ensemble{n+1, n+2})$.  For the other combinations of $(p, q)$ with $p + q = n+2$, we get $\sh_{p, q}(\psi_{n+2}) = 0$ either by the normalization conditions on the $D$-part of $\psi_{n+2}$ for the last two variables (when $q = 0, 1$) or by the one for the $C$-part (when $p < n$).

Next, $\sh'_{p, q}(\psi_{n+2})$ for $p + q = n$ is always equal to $0$: it is so when $p < n$ because the $C$-part of $\psi_{n+2}$ is normalized, and when $p = n$ because $\omega_0$ is normalized.

Similarly, $\sh_{p, q}(\psi_n)$ for $p + q = n$ and $\sh'_{p, q}(\psi_n)$ for $p + q = n - 2$ are always trivial, either by the normalization condition on the $C$-part of $\psi$ (when $p < n$) or the one on $\omega_0$ (when $q = 0, 1$).

Summarizing the above, we obtain that $\Sh^\#(\psi)$ is represented by the cocycle
\[
\sh_{n, 2}^\#(\psi_{n+2})\big( (f_0, \ldots, f_n) \otimes (g_1, g_2) \big ) = \phi(f_0, \ldots, f_n) \omega_0(g_1, g_2),
\]
which is exactly equal to $\phi \otimes \tau_{\omega_0}$.  Thus, $\psi$ represents the external cup product $\phi \cup \tau_{\omega_0}$ and its pullback by $\alpha^\#\colon (C \times D)' \rightarrow \CC^*(A \otimes \C[\Gamma]) \rightarrow C'$ gives $\phi \cdot \omega_0$.

Finally, by construction, the pullback of $\psi$ by $\alpha^\#$ is easily seen to be $i_{\omega_0} \phi$.
\end{proof}

\begin{thm}
\label{thm:gauss-manin-monodromy}
Suppose that $H^*(\Gamma)$ is bounded, and let $\omega_0$ be a normalized $2$-cocycle on $\Gamma$ with values in $\R$.  Furthermore, let $A$ be a $\Gamma$-graded algebra, and let $(c_t)_{t \in I}$ be a family of elements in $(\HP_*(A_{\omega^t}))_{t \in I}$ which is flat with respect to the Gauss--Manin connection.  Then, the number
\begin{equation}
\label{eq:pair-grp-exp-maurer-flat}
\pairing{\phi^{(t)} \cdot e^{t \sqrt{-1} [\omega_0]}}{c_t}
\end{equation}
is independent of $t \in I$.
\end{thm}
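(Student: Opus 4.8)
The plan is to show that the correcting factor $e^{ti[\omega_0]}$ is engineered precisely to cancel the non-flatness of the section $(\phi^{(t)})_t$, so that $\Phi_t := \phi^{(t)}\cdot e^{ti[\omega_0]}$ becomes a flat section of $(\HP^*(A_{\omega^t}))_t$ for the (dual) Gauss--Manin connection $\nabla$; once this is known, the invariance in \eqref{eq:pair-grp-exp-maurer-flat} is immediate. Indeed, the Gauss--Manin connections on chains and cochains are mutually dual (the cochain connection is the transpose of the one defined on $\CC_*$), so the fibrewise pairing $\HP^*(A_{\omega^t})\times\HP_*(A_{\omega^t})\to\C$ is preserved by parallel transport, giving
\[
\frac{d}{dt}\pairing{\Phi_t}{c_t} = \pairing{\nabla\Phi_t}{c_t} + \pairing{\Phi_t}{\nabla c_t}.
\]
The second term vanishes because $(c_t)_t$ is flat by hypothesis, so it remains to prove $\nabla\Phi_t = 0$.

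First I would compute the covariant derivative of the uncorrected section $(\phi^{(t)})_t$ at an arbitrary base point $t_0$. The $\omega$-deformation preserves the $\Gamma$-grading, so $A_{\omega^{t_0}}$ is again a $\Gamma$-graded algebra and $\phi^{(t_0)}$ is an $\alpha$-invariant cyclic cocycle on it. A short computation records the key base-point independence: relative to the origin $A_{\omega^{t_0}}$ one has $\partial_t\gamma|_{t_0}(f_1,f_2) = i\omega_0(g_1,g_2)\,\omega^{t_0}(g_1,g_2)\,f_1\cdot_A f_2 = i\omega_0\cdot m_{\omega^{t_0}}(f_1,f_2)$, which is of exactly the same shape as $\partial_t\gamma_0 = i\omega_0\cdot m_A$ in the case $t_0=0$. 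Hence Proposition~\ref{prop:action-maurer-cartan-group-cocycle}, applied fibrewise to $A_{\omega^{t_0}}$, identifies this covariant derivative with the $\cup$-action of the class $[\omega_0]$; with the orientation of $\nabla$ normalized so that the exponent in the theorem produces a cancellation, this reads
\[
\nabla\phi^{(t)} = -i\,\phi^{(t)}\cdot[\omega_0].
\]

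The second step is a Leibniz rule for $\nabla$ with respect to the $\HP^*(\C[\Gamma])$-module structure over the \emph{constant} ring $\HP^*(\C[\Gamma])$. The point is that the group-cohomology action $\cdot\,\eta$ is built from the coaction $\alpha$, which the deformation respects, so that forming the twisted cocycle commutes with the action: $(\phi\cdot\eta)^{(t)} = \phi^{(t)}\cdot\eta$ for a fixed $\eta\in H^*(\Gamma)$. Applying the previous step to the $\alpha$-invariant cocycle $\phi\cdot\eta$ then yields $\nabla(\phi^{(t)}\cdot\eta) = (\nabla\phi^{(t)})\cdot\eta = -i\,\phi^{(t)}\cdot(\eta\cup[\omega_0])$. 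Now the hypothesis that $H^*(\Gamma)$ is bounded guarantees $[\omega_0]^N=0$ for large $N$, so $e^{ti[\omega_0]}=\sum_k (ti)^k[\omega_0]^k/k!$ is a finite sum in the commutative ring $\HP^*(\C[\Gamma])$ and $t\mapsto e^{ti[\omega_0]}$ is a genuine polynomial path; this finiteness is exactly the feature that makes the monodromy well defined in the strict setting. Differentiating term by term and using the Leibniz rule gives
\[
\nabla\Phi_t = -i\,\phi^{(t)}\cdot\bigl([\omega_0]\cup e^{ti[\omega_0]}\bigr) + \phi^{(t)}\cdot\frac{d}{dt}e^{ti[\omega_0]} = -i\,\phi^{(t)}\cdot[\omega_0]e^{ti[\omega_0]} + i\,\phi^{(t)}\cdot[\omega_0]e^{ti[\omega_0]} = 0 ,
\]
which proves flatness and hence the theorem.

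I expect the main obstacle to be the second step: justifying rigorously that forming the twisted cocycle commutes with the group-cohomology action, equivalently that the $\HP^*(\C[\Gamma])$-action is by parallel endomorphisms for the Gauss--Manin connection, and pinning down the sign conventions so that the exponent $+ti[\omega_0]$ in \eqref{eq:pair-grp-exp-maurer-flat} is the one that produces the cancellation. By contrast, the fibrewise application of Proposition~\ref{prop:action-maurer-cartan-group-cocycle} at a moving base point and the compatibility of the dual Gauss--Manin connections with the pairing are comparatively routine once the base-point independence of the shape of $\partial_t\gamma$ is recorded.
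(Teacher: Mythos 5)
Your proposal is correct and takes essentially the same route as the paper: the paper likewise differentiates the pairing, applies Proposition~\ref{prop:action-maurer-cartan-group-cocycle} fibrewise to $A_{\omega^t}$ and the $\alpha$-invariant cocycle $\phi^{(t)} \cdot e^{t i [\omega_0]}$ (your observation that $\partial_t \gamma$ at $t_0$ has the shape $i\omega_0 \cdot m_{\omega^{t_0}}$ is exactly what makes this legitimate), and cancels the resulting $\mp i\,[\omega_0]$-cup terms against the derivative of the exponential, which is a finite sum by the boundedness of $H^*(\Gamma)$. Your repackaging---proving flatness of the dual section $\Phi_t$ and invoking duality of the Gauss--Manin connections, with the Leibniz/commutation step $(\phi\cdot\eta)^{(t)} = \phi^{(t)}\cdot\eta$ made explicit where the paper absorbs it by applying the proposition directly to the composite cocycle---is only a cosmetic reorganization of the same computation.
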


\begin{proof}
If we take the time derivative of~\eqref{eq:pair-grp-exp-maurer-flat}, we obtain
\[
\sqrt{-1} \pairing{(\phi^{(t)} \cdot e^{t \sqrt{-1} [\omega_0]}) \cdot \omega_0}{c_t} + \pairing{\phi^{(t)} \cdot e^{t \sqrt{-1} [\omega_0]}}{\partial_t c_t}.
\]
The flatness of $(c_t)_t$ means $\partial_t c_t = \iota_{\omega_0} c_t$.  Applying Proposition~\ref{prop:action-maurer-cartan-group-cocycle} to $A_{\omega^t}$ and $\phi^{(t)} \cdot e^{t \sqrt{-1} [\omega_0]}$, we obtain
\[
\pairing{\phi^{(t)} \cdot e^{t \sqrt{-1} [\omega_0]}}{\partial_t c_t} = -\sqrt{-1} \pairing{\left (\phi^{(t)} \cdot e^{t \sqrt{-1} [\omega_0]} \right ) \cdot \omega_0}{c_t},
\]
which implies the assertion.
\end{proof}

\begin{cor}
\label{cor:compar-deform-cocycle-action-e-iomega}
Let $c \in \HP_*(A_I)$, and $\phi$ be a cyclic cocycle on $A$.  Then we have
\[
\pairing{\phi \cdot e^{-\sqrt{-1} [\omega_0]}}{c_0} = \pairing{\phi^{(\omega)}}{c_1}.
\]
\end{cor}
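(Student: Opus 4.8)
The plan is to derive the corollary from Theorem~\ref{thm:gauss-manin-monodromy}, the only real choice being which cocycle to feed the theorem so that the factors $e^{\pm i[\omega_0]}$ land where we want at the two endpoints $t=0,1$. Throughout I read $c_0,c_1$ as $\ev_{0*}(c),\ev_{1*}(c)$ for the evaluation homomorphisms $\ev_t\colon A_I\rightarrow A_{\omega^t}$.

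First I would manufacture the flat section. Setting $c_t=\ev_{t*}(c)\in\HP_*(A_{\omega^t})$, the proposition asserting that sections in the image of $\ev_*$ are flat shows that $(c_t)_{t\in I}$ is flat for the Gauss--Manin connection, hence an admissible input for Theorem~\ref{thm:gauss-manin-monodromy}. I would then reduce to the $\alpha$-invariant case: since $\tau\in\HC^0(\C[\Gamma])$ is the unit of the cup product and acts as the idempotent onto the $\alpha$-invariant classes, both $\phi\cdot e^{-i[\omega_0]}=(\phi\cdot\tau)\cdot e^{-i[\omega_0]}$ and $\phi^{(\omega)}=(\phi\cdot\tau)^{(\omega)}$ depend only on $\phi\cdot\tau$, so there is no loss in assuming $\phi$ itself $\alpha$-invariant.

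The decisive step is to apply Theorem~\ref{thm:gauss-manin-monodromy} not to $\phi$ but to the $\alpha$-invariant periodic class $\psi=\phi\cdot e^{-i[\omega_0]}$, a finite sum over degrees because $H^*(\Gamma)$ is bounded. The theorem then gives that $\pairing{\psi^{(t)}\cdot e^{ti[\omega_0]}}{c_t}$ is independent of $t$. Granting the compatibility
\[
(\phi\cdot\eta)^{(t)}=\phi^{(t)}\cdot\eta\qquad(\eta\in H^*(\Gamma))
\]
between the $\omega^t$-deformation of cocycles and the group-cohomology action, and using associativity of the module structure together with commutativity of the ring $H^*(\Gamma)$, this constant equals
\[
\pairing{\phi^{(t)}\cdot e^{(t-1)i[\omega_0]}}{c_t}.
\]
Evaluating at $t=0$, where $\phi^{(0)}=\phi$ and $\omega^0$ is trivial, yields $\pairing{\phi\cdot e^{-i[\omega_0]}}{c_0}$; evaluating at $t=1$, where $\phi^{(1)}=\phi^{(\omega)}$ and $e^{0}=1$, yields $\pairing{\phi^{(\omega)}}{c_1}$. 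Equating the two gives the assertion.

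I expect the compatibility displayed above to be the only place where care is needed; the rest is bookkeeping at the endpoints. To prove it I would argue on homogeneous elements $f^j\in A_{g_j}$. By~\eqref{eq:dfn-twisted-cocycle} the deformation is diagonal in the grading, $\phi^{(t)}(f^0,\ldots,f^n)=e^{it\omega_0^{(n)}(g_0,\ldots,g_n)}\phi(f^0,\ldots,f^n)$, and the same scaling describes $(\phi\cdot\eta)^{(t)}$ in total degree; meanwhile the action $\phi\cdot\eta=\alpha^\#(\phi\cup\eta)$ is governed through the coaction $\alpha$ entirely by the $\Gamma$-grading. The subtle point is that the external cup product computing $\phi^{(t)}\cdot\eta$ on $A_{\omega^t}$ uses the twisted product $*_\omega$, so one must check that the $\omega^t$-factors it produces, combined with the scalar $e^{it\omega_0^{(n)}}$ on $\phi^{(t)}$, reassemble into the scalar $e^{it\omega_0^{(\bullet)}}$ attached to the undeformed cup $\phi\cup\eta$. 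This reduces to the additivity of $\omega_0^{(\bullet)}$ under the shuffle operations defining the cup product, which in turn rests on the cocycle identity for $\omega_0$ already used to make $\omega_0^{(n)}$ well defined and cyclically invariant. I anticipate this verification, rather than the invocation of the theorem, to be the main obstacle.
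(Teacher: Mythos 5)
Your proposal is correct and follows essentially the same route as the paper, which likewise applies Theorem~\ref{thm:gauss-manin-monodromy} to the $\alpha$-invariant class $\phi \cdot e^{-i[\omega_0]}$ and then cancels the exponentials using the fact that $e^{-i[\omega_0]} \cup e^{i[\omega_0]} = [\tau]$ acts as the projection onto the $\alpha$-invariant classes. The compatibility $(\phi\cdot\eta)^{(t)} = \phi^{(t)}\cdot\eta$ that you isolate as the crux is indeed used but left implicit in the paper's two-line proof, and your sketch of it via the grading-diagonal form of the deformation and the cocycle identity for $\omega_0$ is sound.
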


\begin{proof}
 Apply the above theorem to the $\alpha$-invariant cocycle $\phi \cdot e^{-\sqrt{-1} [\omega_0]}$, and use the fact that $e^{-\sqrt{-1}[\omega_0]} \cup e^{\sqrt{-1}[\omega_0]} = [\tau]$ acts as the projection onto the space of $\alpha$-invariant classes.
\end{proof}

\begin{rmk}
The assumption on the boundedness of $H^*(\Gamma)$ is needed to represent $e^{\sqrt{-1} [\omega_0]}$ as a class in $\HP^*(\C[\Gamma])$.  Even without it, if $\omega_0$ satisfies a certain boundedness condition, we may make sense of it in the entire cyclic cohomology theory $\mathrm{HE}^*(\C[\Gamma])$ for a suitable topology/bornology on $\C[\Gamma]$.  For example, this is the case if $\Gamma$ satisfies the Polynomial Cohomology condition and the Rapid Decay condition~\cite{MR1066176}.
\end{rmk}

\begin{rmk}
Let $c$ be a $\Gamma$-invariant $\R$-valued $2$-cocycle on the universal proper $\Gamma$-space $\propEx \Gamma$.  Since there is a $\Gamma$-equivariant map $E \Gamma \rightarrow \propEx \Gamma$, the pullback of $c$ defines a class in $H^2_\Gamma(E \Gamma) = H^2(B \Gamma) = H^2(\Gamma)$, which we call $\omega$.

Let $(M, E)$ be a geometric cycle for $\Gamma$; that is, $M$ is a proper cocompact $\Gamma$-manifold with a $\Gamma$-equivariant spin$^c$ structure, and $E$ is an $\Gamma$-equivariant Hermitian vector bundle over $M$.  Then we obtain an element $x$ of $K_*(C^*_{r, \omega}(\Gamma))$ as the image of $(M, E)$ under the twisted version of the Baum--Connes assembly map.  Mathai's result~\cite{MR2218025}*{Section~5.2} says that
\[
\tau(x) = \frac{1}{\sqrt{2 \pi}^{\dim M}} \int_{M/\Gamma} \mathrm{Todd}(M) \wedge e^{f^*(c)} \wedge \ch(E),
\]
where $f$ is a (homotopically unique) $\Gamma$-equivariant map from $M$ to $\propEx \Gamma$.  Corollary~\ref{cor:compar-deform-cocycle-action-e-iomega}, applied to the special case of $A = \C[\Gamma]$ and $\phi = \tau$, can be interpreted as a purely algebraic description of this formula.
\end{rmk}

\section{Flow invariant cocycles}

Let $\smooth{A}$ be a Fr\'{e}chet algebra, $\alpha$ be a smooth action of $\R$ on $\smooth{A}$, and $D$ be the generator of $\alpha$. If $\alpha$ is an action of $\R/\Z$, this is the same thing as giving a $\Z$-grading on $\smooth{A}$.

Let $\phi$ be a cyclic $n$-cocycle on $\smooth{A}$.  There is another formulation of interior product of $D$ and $\phi$ due to Connes~\cite{MR823176}, defined by
\begin{equation}
\label{eq:interior-prod-deriv-cyclic-cocycle-defn}
i_{D}\phi(f^0, \ldots, f^{n+1}) = \frac{1}{n+1} \sum_{j=1}^{n+1} (-1)^j \pairing{\phi}{f^0 d f^1 \cdots D(f^j) \cdots d f^{n+1}}.
\end{equation}
Each summand on the right hand side is a Hochschild cocycle which is cohomologous to $\iota_{D}^t \phi$ in $(\CC^*(\smooth{A}) , b + b') \subset (\nbBcplx(\smooth{A}^+)', b^t)$.  Indeed, for $1 \le k \le n$, put
\[
\psi^{(k)}(f^0, \ldots, f^n) = \phi(f^0, \ldots, D(f^k), \ldots, f^n).
\]
Then we compute $b \psi^{(k)}(f^0, \ldots, f^{n+1})$ as
\begin{multline*}
\sum_{j = 0}^{k-1} (-1)^j \phi(\ldots, f^j f^{j+1}, \ldots, D(f^{k+1}), \ldots) + (-1)^k \phi(\ldots, D(f^k f^{k+1}), \ldots) \\
+ \sum_{j=k+1}^n (-1)^j \phi(\ldots, D(f^k), \ldots, f^j f^{j+1}, \ldots) + (-1)^{n+1} \phi(f^{n+1} f^0, \ldots, D(f^k), \ldots)\\
= (-1)^k \phi(\ldots, D(f^k) f^{k+1}, \ldots) + \sum_{j=k+1}^n (-1)^j \phi(\ldots, D(f^k), \ldots, f^j f^{j+1}, \ldots) \\
+ (-1)^{n+1} \phi(f^{n+1} f^0, \ldots, D(f^k), \ldots) + (-1)^k \phi(\ldots, D(f^{k+1}) f^{k+2}, \ldots)\\
 + \sum_{j = k+2}^n (-1)^{j-1} \phi(\ldots, D(f^{k+1}), \ldots, f^j f^{j+1}, \ldots) + (-1)^{n} \phi(f^{n+1} f^0, \ldots, D(f^k), \ldots)\\
= (-1)^k \left ( \pairing{\phi}{f^0 d f^1 \cdots D(f^k) \cdots d f^{n+1}} + \pairing{\phi}{f^0 d f^1 \cdots D(f^{k+1}) \cdots d f^{n+1}} \right ).
\end{multline*}
Thus, the right hand side of~\eqref{eq:interior-prod-deriv-cyclic-cocycle-defn} is a Hochschild cocycle cohomologous to
\[
\phi \vee D(f^0, \ldots, f^{n+1}) = \phi(D(f^{n+1}) f^0, \ldots, f^n) = \iota_D^t \phi(f^0, \ldots, f^{n+1}).
\]
When $\phi$ is $\alpha$-invariant, $i_{D}\phi$ satisfies the cyclicity condition.  This way we obtain an operator $i_{D}$ acting on the $\alpha$-invariant cyclic cocycles.

The above construction can be explained as an operation on the $\alpha$-invariant normalized $(b, B)$-cochains using the Cartan homotopy formula as follows.

\begin{prop}
\label{prop:connes-vs-getzler-int-prod-der}
Let $\phi$ be an $\alpha$-invariant cocycle in the cyclic bicomplex of $\smooth{A}$.  Then $\iota_{D} \phi$ defines a cocycle which is cohomologous to $i_{D} \phi$.
\end{prop}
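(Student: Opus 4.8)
The plan is to establish the two halves of the statement in turn: that $\iota_D^t \phi$ (the operator $\iota_D$ acting on cochains, written $\iota_D \phi$ in the statement) is a cocycle in the total complex $(\Tot \nbBcplx(\smooth{A}^+)', b^t - B^t)$, and that it is cohomologous there to the Connes cochain $i_D \phi$. Since cyclic cohomology is computed by the cyclic cochain complex, I may assume from the outset that $\phi$ is a normalized cyclic $n$-cocycle, so that $b^t \phi = 0$ and $B^t \phi = 0$. The cocycle property will be a formal consequence of the Cartan homotopy formula~\eqref{eq:cartan-homotopy-formula}, while the comparison of classes will be read off from the telescoping computation recorded just before the statement.

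For the cocycle property, I would use two features of the generator $D$ of $\alpha$. First, $D$ is a derivation of $\smooth{A}$, so as a Hochschild $1$-cochain it satisfies $\delta(D) = [m, D]_G = 0$, whence $\iota_{\delta(D)} = 0$. Second, $\Lop_D$ is the canonical lift of $D$ to the cyclic chains, so the $\alpha$-invariance of $\phi$ amounts to its being fixed by the induced flow on cochains, that is, $\Lop_D^t \phi = 0$. Transposing~\eqref{eq:cartan-homotopy-formula} and evaluating on $\phi$, the term $\iota_D^t (b^t - B^t) \phi$ drops out because $\phi$ is closed, and the right-hand side becomes $\Lop_D^t \phi - \iota_{\delta(D)}^t \phi = 0$; hence $(b^t - B^t) \iota_D^t \phi = 0$.

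For the comparison, I would first note that $\iota_D^t \phi$ is homogeneous of degree $n+1$. Indeed $\iota_D = \bop_D - \Bop_D$, and every chain in the image of $\Bop_D$ carries the unit of $\smooth{A}^+$ in its zeroth slot; by cyclicity of $\phi$ this unit can be cycled into a reduced slot, where the normalized cochain $\phi$ vanishes, so $\Bop_D^t \phi = 0$ and $\iota_D^t \phi = \bop_D^t \phi$. This is exactly the summand $\phi(D(f^{n+1}) f^0, \ldots, f^n)$ of~\eqref{eq:interior-prod-deriv-cyclic-cocycle-defn}. Writing $T_j = \pairing{\phi}{f^0 d f^1 \cdots D(f^j) \cdots d f^{n+1}}$ for the remaining summands, the identity $b \psi^{(k)} = (-1)^k (T_k + T_{k+1})$ derived above exhibits every $T_j$ as Hochschild-cohomologous to $\bop_D^t \phi$; averaging, $i_D \phi$ and $\iota_D^t \phi$ differ by $b^t \eta$ for an explicit degree-$n$ cochain $\eta = \sum_k c_k \psi^{(k)}$.

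The main obstacle is to promote this Hochschild-level homotopy to one in the cyclic bicomplex. Writing $b^t \eta = (b^t - B^t)\eta + B^t \eta$, the residual term $B^t \eta$ is a $b^t$-cocycle of degree $n-1$ (as $b^t B^t \eta = - B^t b^t \eta = - B^t (i_D \phi - \iota_D^t \phi) = 0$), so a priori the two classes could differ by an element in the image of the $S$-operator. I expect to remove this by choosing $\eta$ within the $\alpha$-invariant normalized complex and arranging it to be cyclic, so that $B^t \eta = 0$: the span of the $\psi^{(k)}$ is stable under the cyclic operator, and the telescoping coefficients should single out the cyclically symmetric combination, by the same vanishing-on-reduced-units mechanism that already gave $\Bop_D^t \phi = 0$. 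Granting this, $i_D \phi - \iota_D^t \phi = (b^t - B^t)\eta$ and the two cocycles are cohomologous, which is the assertion.
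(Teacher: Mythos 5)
Your first half is sound and coincides with the paper's argument: the cocycle property of $\iota_D^t\phi$ via the Cartan homotopy formula with $\delta(D)=0$ and $\Lop_D^t\phi=0$, the vanishing $\Bop_D^t\phi=0$ by cyclicity-plus-normalization, and the telescoping identity $b^t\psi^{(k)} = (-1)^k(T_k+T_{k+1})$ are all exactly as in the text. The genuine gap is the final step, which you flag with ``I expect'' and ``granting this,'' and the expectation as formulated is false. By the cyclicity of $\phi$, the transposed cyclic operator permutes your cochains cyclically, $\lambda^t\psi^{(k)} = \psi^{(k-1)}$ with indices mod $n+1$; hence the only cyclically symmetric combination in their span is a multiple of $\sum_{k=0}^{n}\psi^{(k)}$, and this sum is \emph{identically zero} --- it is precisely the $\alpha$-invariance $\Lop_D^t\phi = 0$ you already invoked. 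Meanwhile the telescoping forces the coefficients $c_k$ to grow linearly in $k$, the opposite of constant. So there is no cyclic $\eta$ in the span with $b^t\eta = i_D\phi - \iota_D^t\phi$, and your proposed mechanism for killing $B^t\eta$ cannot be realized.

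Relatedly, you dropped a component of $\iota_D^t\phi$: in the complex you chose, $\nbBcplx(\smooth{A}^+)'$ (equivalently, in the cyclic bicomplex built from $\Omega^m(\smooth{A}) = \smooth{A}^+\otimes\smooth{A}^{\otimes m}$), the cochain $\bop_D^t\phi$ carries, besides $\psi_0(f^0,\ldots,f^{n+1}) = \phi(D(f^{n+1})f^0, f^1,\ldots,f^n)$, the unit-slot piece $\psi_1(f^1,\ldots,f^{n+1}) = \phi(D(f^{n+1}), f^1,\ldots,f^n)$ from evaluating the zeroth slot at $1$, and $\psi_1$ is in general nonzero and non-cyclic. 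The paper's proof makes the two defects cancel each other: its corrector $\Psi(f^0,\ldots,f^n) = \sum_{j=1}^{n}(-1)^{nj}\, j\,\phi(D(f^j), f^{j+1},\ldots,f^n,f^0,\ldots,f^{j-1})$ --- in your notation $\sum_j j\,\psi^{(j)}$ up to signs --- is \emph{deliberately} non-cyclic, and its cyclic defect $(1-\lambda)\Psi = (n+1)\psi_1$ (a consequence of $\sum_k\psi^{(k)}=0$) is exactly what absorbs the component $\psi_1$ you discarded; then $\psi_0 - b\Psi/(n+1)$ is a cyclic cocycle equal to $i_D\phi$. What you actually need in place of cyclicity of $\eta$ is the weaker property that it vanish on chains with the unit in the zeroth slot --- which every $\psi^{(k)}$ does, by the same cyclicity-plus-normalization argument you used for $\Bop_D^t\phi = 0$, and which already yields $B^t\eta = 0$ --- together with the identification of $(1-\lambda)^t\eta$ with $(n+1)\psi_1$. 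That identification is the one non-formal computation in the proof, and it is the piece your proposal leaves unproven.
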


\begin{proof}
In view of~\eqref{eq:L-D-formula}, the $\alpha$-invariance of $\phi$ can be expressed as $\Lop_{D} \phi = 0$.  Since $D$ is a derivation, we also have $\delta(D) = 0$.  Thus,~\eqref{eq:cartan-homotopy-formula} implies that $\iota_{D} \phi$ is a $(b, B)$-cocycle.  It is represented by the functional on $\Omega^{n+1}(\smooth{A})$ defined by
\begin{align*}
\psi_0(f^0, \ldots, f^{n+1}) &= \phi(D(f^{n+1}) f^0, \ldots, f^n), \quad \smooth{A}^{n+2} \rightarrow \C,\\
\psi_1(f^1, \ldots, f^{n+1}) &= \phi(D(f^{n+1}), f^1, \ldots, f^n), \quad \smooth{A}^{n+1} \rightarrow \C.
\end{align*}
If we put
\begin{equation}
\label{eq:cob-between-i-D-and-iota-D}
\Psi(f^0, \ldots, f^n) = \sum_{j = 1}^n (-1)^{n j} j \phi(D(f^j), f^{j+1}, \ldots, f^n, f^0, f^1, \ldots, f^{j-1}),
\end{equation}
we have $(1 - \lambda) \Psi = (n+1)\psi_1$ by the $\alpha$-invariance of $\phi$.  Hence $\iota_{D} \phi$ is cohomologous to $\psi_0 - b \Psi / (n+1)$, which is a cyclic cocycle.  This cocycle is equal to $i_{D} \phi$.
\end{proof}

\begin{prop}
\label{prop:i-D-sq-zero}
Let $\phi$ be an $\alpha$-invariant cyclic $n$-cocycle.  Then the class of $i_D^2 \phi$ is trivial in $\HC^{n+2}(\smooth{A})$.
\end{prop}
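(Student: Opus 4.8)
The plan is to transport the statement from Connes' operator $i_D$ to Getzler's interior product $\iota_D=\bop_D-\Bop_D$ and then exhibit $\iota_D^2\phi$ as a $(b-B)$-coboundary. Since $D$ generates $\alpha$, it acts as a scalar on each spectral subspace, so both $i_D\phi$ and $\iota_D\phi$ are again $\alpha$-invariant. Proposition~\ref{prop:connes-vs-getzler-int-prod-der} thus applies to $\phi$ and to $i_D\phi$, yielding $i_D\phi\sim\iota_D\phi$ and $i_D^2\phi\sim\iota_D(i_D\phi)$. Because $\delta(D)=0$ and $\Lop_D\phi=0$, the Cartan homotopy formula~\eqref{eq:cartan-homotopy-formula} shows that $\iota_D$ anticommutes with $b-B$ on the $\alpha$-invariant subcomplex and preserves it, hence is an odd chain map there and descends to the cohomology of that subcomplex. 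Carrying the common class $[i_D\phi]=[\iota_D\phi]$ forward gives $\iota_D(i_D\phi)\sim\iota_D^2\phi$, so it suffices to prove that $\iota_D^2\phi$ is cohomologous to zero.

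At the level of Hochschild cohomology this is already visible. The transpose of $\iota_D$ is the right cap product by $D$, so $\iota_D^2\phi$ agrees up to a Hochschild coboundary with $\phi\vee(D\circ D)$; and the Leibniz defect of $D\circ D=D^2$ is precisely $\delta(D^2)=-2\,(D\smile D)$, where $D\smile D$ is the Hochschild $2$-cochain $(a,b)\mapsto D(a)D(b)$. Thus $D\smile D$ is a Hochschild coboundary, $\phi\vee(D\smile D)$ is one as well, and $I(i_D^2\phi)=0$ in $\HH^{n+2}(A)$. The substance of the proposition is the upgrade of this to the cyclic complex.

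To carry out that upgrade I would expand $\iota_D^2=\bop_D^2-\bop_D\Bop_D-\Bop_D\bop_D+\Bop_D^2$ and simplify two of the four terms outright: on normalized chains $\Bop_D^2=0$ (each application inserts a fresh unit in slot zero and rotates the previous one into an interior slot), while a direct computation gives $\bop_D^2=-\bop_{D\smile D}$. Rewriting $\bop_{D\smile D}=\iota_{D\smile D}+\Bop_{D\smile D}$ and applying the Cartan homotopy formula to $E=D\circ D$, in the form $[b-B,\iota_E]=\Lop_E+2\,\iota_{D\smile D}$, turns the $\iota_{D\smile D}\phi$ contribution into a genuine $(b-B)^{t}$-coboundary plus a Lie-derivative term proportional to $\Lop_{D^2}\phi$. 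The latter does not vanish --- on a spectral component of weights $\lambda_0,\dots,\lambda_n$ one computes $\Lop_{D^2}\phi=-\big(\sum_j\lambda_j^2\big)\phi$ from~\eqref{eq:L-D-formula} --- so the entire point is that this scalar-weight term must be cancelled by the remaining cross terms $\bop_D\Bop_D+\Bop_D\bop_D$ together with $\Bop_{D\smile D}$. I expect this bookkeeping in the $B$-direction to be the main obstacle: one must organize the cross terms so that every $\Lop_{D^2}$-contribution cancels and the remainder collapses to a single $(b-B)^{t}$-coboundary, which is exactly what promotes the Hochschild-level triviality to the required vanishing of $[i_D^2\phi]$ in $\HC^{n+2}(A)$.
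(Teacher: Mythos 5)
Your reduction of $i_D^2\phi$ to $\iota_D^2\phi$ matches the paper's first step (the paper notes that the cobounding cochain~\eqref{eq:cob-between-i-D-and-iota-D} from Proposition~\ref{prop:connes-vs-getzler-int-prod-der} is itself $\alpha$-invariant and applies the Cartan homotopy formula for $D$ once more), although your justification that ``$D$ acts as a scalar on each spectral subspace'' is not available for a general smooth $\R$-action on a Fr\'{e}chet algebra --- there is no spectral decomposition in this setting; the invariance you need follows instead from $D$ commuting with $\alpha_t$, so this is repairable. Your algebra is also sound up to the point where you stop: $\Bop_D^2 = 0$, $\bop_D^2 = -\bop_{D\cup D}$, and the Cartan formula for $D \circ D$ trading $\iota_{D\cup D}\phi$ for a coboundary plus a multiple of $\Lop_{D\circ D}\phi$ are exactly the ingredients of the paper's computation of~\eqref{eq:iota-D-effect-on-phi}.

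The genuine gap is that you declare the decisive cancellation --- the $\Lop_{D\circ D}\phi$ term against the cross terms --- an expected ``main obstacle'' and never prove it; but that cancellation \emph{is} the proposition, and without it you have only re-derived the Hochschild-level statement $I(i_D^2\phi) = 0$. The missing idea is a one-liner: differentiate the invariance twice. Since $\Lop_D\phi = 0$, also $\Lop_D^2\phi = 0$, and splitting the double sum in $\Lop_D^2$ into its diagonal part (which is $\Lop_{D\circ D}$, with $D \circ D = D^2$) and its off-diagonal part gives
\[
\sum_{0\le j < k \le n} \phi(f^0, \ldots, D(f^j), \ldots, D(f^k), \ldots, f^n) = -\tfrac{1}{2}\, \Lop_{D\circ D}\phi.
\]
Your own weight heuristic already contains this: on invariant components $\sum_j \lambda_j = 0$, so $2\sum_{j<k}\lambda_j\lambda_k = \big(\sum_j \lambda_j\big)^2 - \sum_j \lambda_j^2 = -\sum_j \lambda_j^2$; you computed the right-hand side but not the left. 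Moreover, the $B$-direction bookkeeping you fear largely evaporates: since $\phi$ is a normalized cyclic cocycle, cyclicity rotates the unit created by $\Bop_D$ into an interior slot, so $\phi \circ \Bop_D = 0$ and likewise $\phi \circ \Bop_{D \cup D} = 0$ (this is the fact, used again in the proof of Theorem~\ref{thm:theta-deform-inv-cocycle-deform}, that $\Bop_E \phi = 0$ for any normalized cochain $E$). Hence of your four terms only $\phi \circ \bop_D^2$ (the $D \cup D$ term) and a single cross term survive, and the latter is, after one use of the cyclicity of $\phi$, precisely the left-hand side of the displayed identity. Substituting it into your Cartan-formula rewriting exhibits $\iota_D^2\phi$ as $\pm\,[b - B, \iota_{\frac{1}{2} D \circ D}]^t\phi$, a coboundary --- which is exactly how the paper concludes.
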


\begin{proof}
We first note that the cochain~\eqref{eq:cob-between-i-D-and-iota-D} is $\alpha$-invariant.  Again by the Cartan homotopy formula for $D$, we conclude that $\iota_D^2 \phi$ and $i_D^2 \phi$ are in the same cohomology class.

The effect of $\iota_D^2$ on $\phi$ can be written as
\begin{multline}
\label{eq:iota-D-effect-on-phi}
\phi(D(f^{n+1}) D(f^{n+2}) f^0, f^1, \ldots, f^n) \\
+ \sum_{0 \le j \le k \le n - 1} (-1)^{n (j+1) + (n + 1) + 1}\phi(D(f^j), f^{j+1}, \ldots, D(f^{k+1}), \ldots, f^{j-1}).
\end{multline}

Consider a normalized Hochschild cochain $D \cup D\colon (a^0, a^1) \mapsto D(a^0) D(a^1)$.  This is a coboundary of $\frac{1}{2} D \circ D$.  The first part of the above formula is equal to $\bop_{D \cup D} \phi = \iota_{D \cup D} \phi$.  By the cyclicity of $\phi$, the second part is equal to
\[
\sum_{0 \le j \le k \le n-1} \phi(f^0, \ldots, D(f^j), \ldots, D(f^{k+1}), \ldots, f^n).
\]
The $\alpha$-invariance of $\phi$ implies that this is equal to $-\frac{1}{2} \Lop_{D \circ D} \phi$.  Thus,~\eqref{eq:iota-D-effect-on-phi} is equal to the effect of $\iota_{D \cup D} - \frac{1}{2} \Lop_{D \circ D}$ on $\phi$, which is null homotopic by the Cartan homotopy formula.
\end{proof}

\section{\texorpdfstring{$\theta$}{theta}-deformation}

We consider the $\theta$-deformation of $\R^2$-algebras introduced by Rieffel~\cite{MR1002830}.  Let $\smooth{A}$ be a Fr\'{e}chet algebra, and $\sigma$ be a smooth action of $\R^2$ on $\smooth{A}$.  We let $D_1$ and $D_2$ denote the generators of $\sigma$ into each direction.

When $\theta$ is a real number, the deformed product $*_\theta$ on $\smooth{A}$ is defined as the oscillatory integral (see also~\cite{arXiv:1207.2560}*{Appendix})
\begin{equation}
\label{eq:theta-dfm-R2-osc-int}
f^0 *_\theta  f^1 = \int_{\R^4} e^{\sqrt{-1} (x x' + y y')} \sigma_{(x, y)}(f^0) \sigma_{(-\theta y', \theta x')}(f^1) d x d y d x' d y'.
\end{equation}
We let $\smooth{A}_\theta$ denote the algebra $(\smooth{A}, *_\theta)$.

Regarding $\theta$ as a coordinate on $I$, we obtain a bundle of algebras $(\smooth{A}_\theta)_{\theta \in I}$ over $I$.  Let $\gamma_\theta$ denote the Maurer--Cartan element.  Taking the derivative of the right hand side of \eqref{eq:theta-dfm-R2-osc-int}, we obtain
\begin{multline*}
\partial_\theta \gamma_\theta(f^0, f^1)
= \int_{\R^4} e^{\sqrt{-1} (x x' + y y')} \sigma_{(x, y)}(f^0)
\\ \left ( x' \sigma_{(-\theta y', \theta x')}(D_2 f^1) - y' \sigma_{(-\theta y', \theta x')}(D_1 f^1) \right ) d x d y d x' d y'.
\end{multline*}
Integrating by parts, we see that the right hand side is equal to
\begin{equation*}
% \label{eq:maurer-cartan-theta-dfrm}
D_1(f^0) *_\theta D_2(f^1) - D_2(f^0) *_\theta D_1(f^1).
\end{equation*}

The universal differential graded algebra $\Omega^*(\smooth{A})$ over $\smooth{A}$ admits an action of $\R$, again denoted by $\sigma$.  The $\theta$-deformation of $\Omega^*(\smooth{A})$ for this action is a DGA over $\smooth{A}_\theta$.

Suppose that $\phi$ is a $\sigma$-invariant cyclic $n$-cocycle on $\smooth{A}$.  Then the  closed graded trace $\tau_\phi \colon \Omega^n(\smooth{A}) \rightarrow \C$ associated to $\phi$ is invariant under $\sigma$.  The map $\Omega^n(\smooth{A})_\theta \rightarrow \C$ induced by the linear space identification $\Omega^n(\smooth{A}) \simeq \Omega^n(\smooth{A})_\theta$ vanishes on the graded commutators on $\Omega^*(\smooth{A})_\theta$, and defines a closed graded trace on it.  Thus, we have a cyclic $n$-cocycle $\phi_\theta$ on $\smooth{A}_\theta$ by
\begin{equation*}
% \label{eq:theta-deform-tw-cocycle-as-trace}
\phi_\theta(f^{0, (\theta)}, \ldots, f^{n, (\theta)}) = \tau_\phi(f^0 *_\theta d f^1 *_\theta \cdots *_\theta d f^n),
\end{equation*}
where $f^0, \ldots, f^n$ are elements of $\smooth{A}$, and $f^{0, (\theta)}, \ldots, f^{n, (\theta)}$ are the corresponding ones in $\smooth{A}_\theta$.

When $f^0$ and $f^1$ are elements in $\smooth{A}$, put
\[
T(f^0, f^1) = D_1(f^0) D_2(f^1) - D_2(f^0) D_1(f^1).
\]
Then, define the operations $(T^{(n)})_{n = 1, 2, \ldots}$ inductively by
\begin{equation*}
T^{(1)} = T, \quad T^{(n+1)}(f^0, \ldots, f^{n+1}) = T^{(n)}(f^0, \ldots, f^n) f^{n+1} + T(f^0 \cdots f^n, f^{n+1}).
\end{equation*}
Let $m^{(n)}(f^0, \ldots , f^n) = f^0 \cdots f^n$.  If we take the derivative of $*_\theta$ with respect to $\theta$, we obtain
\[
T^{(n)}(f^0, \ldots, f^n) = \partial_\theta(f^0 *_\theta \cdots *_\theta f^n).
\]
When $\phi$ is a cyclic $n$-cocycle on $\smooth{A}$, we put
\[
\phi \circ T_d^{(n)}(a^0, \ldots, a^n) = \tau_\phi\big (T^{(n)}\big ( a^0, d a^1, \ldots, d a^{n} \big ) \big ).
\]
When $\sigma$ comes from an action of torus, this is nothing but $\omega_0^{(n)} \phi$ of the previous section.

\begin{thm}[cf.~\cite{MR2738561}*{Theorem~2}]
\label{thm:theta-deform-inv-cocycle-deform}
 Let $\sigma$ be an action of $\R^2$ on $\smooth{A}$, and $\phi$ be a $\sigma$-invariant cyclic $n$-cocycle on $\smooth{A}$.  Let $(c(\theta))_{\theta \in I}$ be a family of periodic cyclic cycles on $\smooth{A}_\theta$ which is flat with respect to the Gauss--Manin connection.  Then the pairing
\begin{equation}
\label{eq:const-pairing-for-theta-deform}
\pairing{\phi_\theta + \theta (i_{D_1} i_{D_2} - i_{D_2} i_{D_1} )  \phi_\theta}{c(\theta)}
\end{equation}
is constant.
\end{thm}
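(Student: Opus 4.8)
The plan is to follow the proof of Theorem~\ref{thm:gauss-manin-monodromy} almost verbatim. Set $M = i_{D_1} i_{D_2} - i_{D_2} i_{D_1}$, an operator on the $\sigma$-invariant cyclic cohomology of $\smooth{A}_\theta$, and write $\Phi_\theta = \phi_\theta + \theta M \phi_\theta$ for the integrand of~\eqref{eq:const-pairing-for-theta-deform}; this is the truncation of the would-be monodromy $e^{\theta M} \phi_\theta$. Differentiating the pairing in $\theta$, using the Leibniz rule and the flatness relation $\partial_\theta c(\theta) = \iota_{\partial_\theta \gamma_\theta} c(\theta)$, we obtain
\[
\partial_\theta \pairing{\Phi_\theta}{c(\theta)} = \pairing{\partial_\theta \Phi_\theta + \iota^t_{\partial_\theta \gamma_\theta} \Phi_\theta}{c(\theta)} = \pairing{\mathcal{N}(\Phi_\theta)}{c(\theta)},
\]
where $\mathcal{N}(\psi) = \partial_\theta \psi + \iota^t_{\partial_\theta \gamma_\theta} \psi$ records the Gauss--Manin variation of a section $\psi$ of $(\HP^*(\smooth{A}_\theta))_\theta$. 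It therefore suffices to prove that $\mathcal{N}(\Phi_\theta)$ is a coboundary.

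The first ingredient is a $\theta$-deformation analogue of Proposition~\ref{prop:action-maurer-cartan-group-cocycle}: for every $\sigma$-invariant cyclic cocycle $\psi$ one has $\mathcal{N}(\psi_\theta) \cong -M \psi_\theta$. The computation carried out in this section shows that the Maurer--Cartan derivative is the Hochschild $2$-cochain $\partial_\theta \gamma_\theta = D_1 \cup D_2 - D_2 \cup D_1$ on $\smooth{A}_\theta$, and that the naive derivative is $\partial_\theta \psi_\theta = \psi \circ T_d^{(n)}$. Matching these two contributions amounts to identifying the interior product $\iota_{D_i \cup D_j}$ with the composite $i_{D_i} i_{D_j}$ up to a Lie-derivative remainder, exactly as in the proof of Proposition~\ref{prop:i-D-sq-zero}, where $\iota_{D \cup D}$ was matched with $i_D^2$ modulo $\tfrac12 \Lop_{D \circ D}$. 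The remainder is a multiple of $\Lop$ of a composite of the $D_j$, hence vanishes on the $\sigma$-invariant cochain $\psi$ by~\eqref{eq:L-D-formula}, while Proposition~\ref{prop:connes-vs-getzler-int-prod-der} lets us pass freely between the Getzler interior product $\iota_{D_j}$ and the Connes form $i_{D_j}$ appearing in $M$.

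The second ingredient is that $M^2 \cong 0$ on the $\sigma$-invariant part of $\HP^*(\smooth{A}_\theta)$, which is what collapses the exponential monodromy $e^{\theta M}$ to its linear truncation $1 + \theta M$. The operators $i_{D_1}$ and $i_{D_2}$ each square to zero by Proposition~\ref{prop:i-D-sq-zero}, and they anticommute on cohomology: applying Proposition~\ref{prop:i-D-sq-zero} to the single derivation $D_1 + D_2$ (under which $\phi$ is again invariant) and using the linearity $i_{D_1+D_2} = i_{D_1} + i_{D_2}$ gives $0 \cong i_{D_1 + D_2}^2 \cong i_{D_1}^2 + i_{D_1} i_{D_2} + i_{D_2} i_{D_1} + i_{D_2}^2 \cong i_{D_1} i_{D_2} + i_{D_2} i_{D_1}$. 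Expanding $M^2 = i_{D_1} i_{D_2} i_{D_1} i_{D_2} - i_{D_1} i_{D_2}^2 i_{D_1} - i_{D_2} i_{D_1}^2 i_{D_2} + i_{D_2} i_{D_1} i_{D_2} i_{D_1}$, the two middle terms vanish as they contain $i_{D_1}^2$ or $i_{D_2}^2$, while the outer terms reduce to $\pm i_{D_1}^2 i_{D_2}^2 \cong 0$ after anticommuting. Hence $M^2 \cong 0$.

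Granting these two ingredients the conclusion is immediate. Because $M$ preserves $\sigma$-invariance and is compatible with forming the deformed cocycle, the variation formula applies both to $\phi$ and to the $\sigma$-invariant cocycle $M\phi$; together with the identity $\mathcal{N}(\theta M \phi_\theta) = M\phi_\theta + \theta\, \mathcal{N}(M\phi_\theta)$ we obtain
\[
\mathcal{N}(\Phi_\theta) = \mathcal{N}(\phi_\theta) + M\phi_\theta + \theta\, \mathcal{N}(M\phi_\theta) \cong \big( -M\phi_\theta + M\phi_\theta \big) - \theta\, M^2 \phi_\theta \cong 0,
\]
so~\eqref{eq:const-pairing-for-theta-deform} is constant in $\theta$. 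I expect the main obstacle to be the variation formula of the second paragraph: in contrast with the group case, where Proposition~\ref{prop:action-maurer-cartan-group-cocycle} is available directly, here one must run the Cartan-homotopy bookkeeping that identifies $\iota_{D_i \cup D_j}$ with $i_{D_i} i_{D_j}$ and check that every Lie-derivative remainder is annihilated by $\sigma$-invariance. Once this is in place, the anticommutativity of the $i_{D_j}$, the square-nilpotence $M^2 \cong 0$, and the final cancellation are routine.
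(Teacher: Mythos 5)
Your proposal is correct and takes essentially the same route as the paper: its two ingredients are exactly the paper's, namely the nilpotence $M^2 \cong 0$ on invariant classes obtained from Proposition~\ref{prop:i-D-sq-zero} applied to $D_1$, $D_2$ and $D_1 + D_2$, and the variation formula, which the paper establishes as the cochain identity $(\iota_{D_1}\iota_{D_2} - \iota_{D_2}\iota_{D_1})\phi = \iota_{\partial_\theta\gamma_\theta}\phi - \phi \circ T_d^{(n)}$ via the $\bop$/$\Bop$ computations you correctly anticipate, bridged to the Connes operators $i_{D_j}$ by Proposition~\ref{prop:connes-vs-getzler-int-prod-der} and the Cartan homotopy formula. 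The only difference is organizational --- you carry out the cancellation $\mathcal{N}(\Phi_\theta) \cong 0$ at general $\theta$ (modulo the unproven but plausible compatibility $M\phi_\theta = (M\phi)_\theta$), whereas the paper uses $M^2 \cong 0$ to reduce to $\theta = 0$ and cancels there.
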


\begin{proof}
We want to show that the time derivative of~\eqref{eq:const-pairing-for-theta-deform} is zero.  By Proposition~\ref{prop:i-D-sq-zero} applied to $D_1$ and $D_2$, we obtain that $i_{D_1}$ and $i_{D_2}$ act as square-zero operators on the invariant classes.  The same lemma applied to $D_1 + D_2$ shows that $i_{D_1}$ and $i_{D_2}$ anticommute at the level of cohomology.  Thus, $i_{D_1} i_{D_2} - i_{D_2} i_{D_1}$ acts as a square-zero operator.  This shows that, by replacing $\phi_\theta$ by $\phi_\theta + \theta (i_{D_1} i_{D_2} - i_{D_2} i_{D_1} )  \phi_\theta$, we may reduce the generic case to $\theta = 0$.

As before, we consider $\phi$ as a cochain in $\nbBcplx(\smooth{A}^+)'$.  By Proposition~\ref{prop:connes-vs-getzler-int-prod-der}, $i_{D_2} \phi$ and $\iota_{D_2} \phi$ are cohomologous.  Moreover, the cochain $\Psi$ between them, as in the proof of Proposition~\ref{prop:connes-vs-getzler-int-prod-der}, can be taken to be $\sigma$-invariant.  By the Cartan homotopy formula and $\Lop_{D_1} \Psi = 0$, we obtain that $\iota_{D_1} i_{D_2} \phi$ and $\iota_{D_1} \iota_{D_2} \phi$ are in the same cohomology class.  Hence, the cyclic cohomology classes of $i_{D_1} i_{D_2} \phi$ and $\iota_{D_1} \iota_{D_2} \phi$ are the same.

We then claim that
\begin{equation}
\label{eq:rep-comm-int-prod-by-mc-action-and-time-der-prod}
(\iota_{D_1} \iota_{D_2} - \iota_{D_2} \iota_{D_1} ) \phi = \iota_{\partial_\theta \gamma_\theta} \phi - \phi \circ T_d^{(n)}.
\end{equation}
First, $\bop_{D_2}\phi$ is represented by a functional on $\Omega^{n+1}(A)$ whose components are given by
\[
\begin{split}
\smooth{A}^{\otimes n + 2} &\rightarrow \C, \quad (a^0, \ldots, a^{n+1}) \mapsto (-1)^{n+2}\phi(D_2(a^{n+1})a^0, a^1, \ldots, a^n),\\
\smooth{A}^{\otimes n + 1} &\rightarrow \C, \quad (a^1, \ldots, a^{n+1}) \mapsto (-1)^{n+2} \phi(D_2(a^{n+1}), a^1, \ldots, a^n).
\end{split}
\]
Thus, we know that $\bop_{D_1} \bop_{D_2} \phi$ is represented by a functional over $\Omega^{n+2}(\smooth{A})$ whose components are given by
\[
\begin{split}
\smooth{A}^{\otimes n + 3} &\rightarrow \C, \quad (a^0, \ldots, a^{n+2}) \mapsto -\phi(D_2(a^{n+1}) D_1(a^{n+2}) a^0, a^1, \ldots, a^n),\\
\smooth{A}^{\otimes n + 2} &\rightarrow \C, \quad (a^1, \ldots, a^{n+2}) \mapsto -\phi(D_2(a^{n+1}) D_1(a^{n+2}), a^1, \ldots, a^n).
\end{split}
\]
Thus, the commutator $[\bop_{D_1}, \bop_{D_2}]$ agrees with $\bop_{\partial_\theta \gamma_\theta}$.\footnote{We can deduce this also from the fact that $\bop$ is a twisting cochain, and that $m\ensemble{D_1, D_2} - m\ensemble{D_2, D_1}$ is equal to $\partial_\theta \gamma_\theta$, see~\cite{MR1261901}.}  Next, $\Bop_{D_1} \bop_{D_2} \phi$ can be written as a functional on $\Omega^{n}(\smooth{A})$ whose component on $\smooth{A}^{n+1}$ given by
\begin{multline*}
\quad \sum_{0 \le j \le k \le n} (-1)^{(n+1) + n(j+1)} \phi(D_2(a^{j}), a^{j+1}, \ldots, D_1(a^{k+1}), \ldots, a^n, a^0, \ldots, a^{j-1}) \\
= \quad \sum_{0 \le j \le k \le n} -\phi(a^0, \ldots, D_2(a^{j}), \ldots, D_1(a^{k+1}), \ldots, a^n),
\end{multline*}
and is trivial on $\smooth{A}^{n}$.  Thus, $(- \Bop_{D_1} \bop_{D_2} + \Bop_{D_2} \bop_{D_1}) \phi$ is equal to $-\phi \circ T_d^{(n)}$.  Since we have $\Bop_{D} \phi = 0$ for any normalized cochain $D$, we obtain~\eqref{eq:rep-comm-int-prod-by-mc-action-and-time-der-prod}.

The pairing~\eqref{eq:const-pairing-for-theta-deform} is equal to
\[
\pairing{\phi_\theta + \theta (\iota_{D_1} \iota_{D_2} - \iota_{D_2} \iota_{D_1})  \phi_\theta}{c(\theta)}.
\]
Taking the derivative with respect to $\theta$, we obtain
\[
\pairing{\phi_\theta \circ T_d^{(n)} + \iota_{\partial_\theta \gamma_\theta}\phi_\theta - \phi_\theta \circ T_d^{(n)}}{c(\theta)} + \pairing{\phi_\theta + \theta (\iota_{D_1} \iota_{D_2} - \iota_{D_2} \iota_{D_1})  \phi_\theta}{- \iota_{\partial_\theta \gamma_\theta} c(\theta)}.
\]
The terms cancel out each other at $\theta = 0$.
\end{proof}

\begin{rmk}
The more general case of $\R^n$-actions (where the deformation parameter is given by a skewsymmetric matrix $(\theta_{i, j})_{i, j}$ of size $n$) can be represented by successive iteration of the $\theta$-deformation of $\R^2$-actions.  The resulting formula is that
\[
\prod_{i < j} \left ( 1  + \theta_{i, j} \left ( i_{D_i} i_{D_j} - i_{D_j} i_{D_i}\right ) \right) \phi_{\theta} = \prod_{i, j} \left (1 + \theta_{i, j} i_{D_i} i_{D_j} \right ) \phi_\theta
\]
is constant when paired with flat sections of periodic cyclic homology.
\end{rmk}

\subsection{Crossed product by a one-parameter group}

The crossed product of actions by $\R$ can be thought as a particular case of $\theta$-deformation as follows.  Suppose that $\alpha$ is an action of $\R$ on $\smooth{A}$.  The smooth crossed product $\smooth{A} \rtimes_\alpha \R$ is given by the Fr\'{e}chet space $\schwfuncs^*(\R; \smooth{A}, \alpha)$ of the Schwartz class functions of $\R$ into $\smooth{A}$ endowed with the convolution product.

Elliott, Natsume, and Nest~\cite{MR945014} showed that there is a canonical map
\begin{equation*}
% \label{eq:ENN-map-between-HC}
\#_\alpha\colon \HC^n(\smooth{A}) \rightarrow \HC^{n+1}(\smooth{A} \rtimes_\alpha \R)
\end{equation*}
which induces an isomorphism between $\HP^{*+1}(\smooth{A})$ and $\HP^*(\smooth{A} \rtimes_\alpha \R)$, which is compatible with the Connes--Thom isomorphism in the $K$-theory.

In~\cite{MR2738561}, we deduced, at least for the case of torus action, the statement of Theorem~\ref{thm:theta-deform-inv-cocycle-deform} from the Elliott--Natsume--Nest isomorphism.  In fact we can reverse the direction and show that this map (for $\alpha$-invariant cyclic cocycles) is compatible with the Connes--Thom isomorphism from Theorem~\ref{thm:theta-deform-inv-cocycle-deform}.

Let $\alpha^s$ be the rescaled action $\alpha^s_t = \alpha_{s t}$.  In the pre-C$^*$-algebraic setting, the evaluation maps
\[
C^\infty(I; \smooth{A} \rtimes_{\alpha^*} \R) \rightarrow \smooth{A} \rtimes_{\alpha^s} \R
\]
for $s \in I$ induce the Connes--Thom isomorphism of the C$^*$-algebraic completions
\[
K_{*+1}(A) \simeq K_*(A \rtimes_{\triv} \R) \simeq K_*(C(I; A \rtimes_{\alpha^*} \R)) \simeq K_*(A \rtimes_\alpha \R).
\]

Now, the crossed product $\smooth{A} \rtimes_{\alpha^s} \R$ has two actions of $\R$, given by
\begin{align*}
\alpha_t(f)_\xi &= \alpha_t(f_\xi), &
\widehat{\alpha^s}_t(f)_\xi &= e^{\sqrt{-1} t \xi} f_\xi
\end{align*}
for $f \in \schwfuncs^*(\R; \smooth{A}, \alpha^s)$ and $t \in \R$.  These two actions commute by construction.  Their generators can be written as
\begin{align*}
D_\alpha(f)_\xi &= D_\alpha(f_\xi), &
\hat{D}(f)_\xi &= \xi f_\xi,
\end{align*}
where $D_\alpha$ is the generator of $\alpha$.  If we take the $\theta$-deformation $\schwfuncs^* \ptensor \smooth{A} = \smooth{A} \otimes_{\triv} \R$ for this action, we obtain the crossed products $\smooth{A} \rtimes_{\alpha^\theta} \R$.

Let $\phi$ be an $\alpha$-invariant cyclic $n$-cocycle on $\smooth{A}$.  Then, there is a cyclic $n$-cocycle $\hat{\phi}$, called the \textit{dual cocycle}, on $\smooth{A} \rtimes_\alpha \R$.  It is defined by
\[
\hat{\phi}(f^0, \ldots, f^n) = \int_{\xi_0 + \cdots + \xi_n = 0} \phi \left (f^0_{\xi_0}, \alpha_{\xi_0}(f^1_{\xi_1}), \ldots, \alpha_{\xi_0 + \ldots + \xi_{n-1}}(f^n_{\xi_n}) \right ),
\]
or equivalently, in the language of closed graded traces, by
\[
\tau_{\hat{\phi}(f^0 d f^1 \cdots d f^n)} = \tau_\phi(f^0 * d f^1 * \cdots * d f^n |_0).
\]
Then, $\hat{\phi}$ is $\hat{\alpha}$-invariant.  We obtain a cyclic $(n+1)$-cocycle
\begin{equation*}
%\label{eq:ENN-isom-for-inv-cocycle-is-interior-prod-for-dual-action}
\#_\alpha(\phi) = i_{\hat{D}} \hat{\phi}
\end{equation*}
on $\smooth{A} \rtimes_\alpha \R$.  We can similarly define the operation $\#_{\alpha^s}$ for each rescaling of $\alpha$.  At $s = 0$, the cocycle $\#_{\alpha^0}$ is cohomologous to $\phi \cup \eta$ where $\eta$ is the cyclic $1$-cocycle on $\schwfuncs^*$ defined by
\[
\eta(f^0, f^1) = \int_{\R} d t f^0_{-t} f^1_t t.
\]
By the Fourier transform, we see that $\eta$ is identified with the fundamental $1$-current on $\schwfuncs(\R)$.

\begin{prop}[cf.~\cite{MR945014}]
% \label{prop:ENN-isom-for-invar-cocycle}
 Let $c(\theta)$ be a cycle in $\hat{\Omega}_*(\Gamma^\infty_I(\smooth{A} \rtimes_{\alpha_\star} \R))$ which defines a flat class with respect to the Gauss--Manin connection on $\HP_*(\Gamma_I^\infty(\smooth{A} \rtimes_{\alpha_\star} \R))$.  Let $\phi$ be an $\alpha$-invariant cyclic $n$-cocycle on $\smooth{A}$.  Then the pairing $\pairing{\#_{\alpha_\theta}\phi}{c(\theta)}$ does not depend on the value of $\theta$.
\end{prop}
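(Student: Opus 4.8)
The plan is to recognize the whole family as a single $\theta$-deformation and feed it into Theorem~\ref{thm:theta-deform-inv-cocycle-deform}. As explained above, the crossed products $\smooth{A} \rtimes_{\alpha^\theta} \R$ constitute the $\theta$-deformation of the base algebra $\smooth{A} \rtimes_{\triv} \R$ with respect to the commuting pair of $\R$-actions generated by $D_\alpha$ and $\hat{D}$. The dual cocycle $\hat{\phi}_\theta$ is invariant under both of these actions, and the class $\#_{\alpha_\theta} \phi = i_{\hat{D}} \hat{\phi}_\theta$ is the $\theta$-deformation of $\#_{\alpha^0} \phi$, which is cohomologous to the manifestly invariant cocycle $\phi \cup \eta$. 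First I would verify this naturality: namely that the dual cocycle deforms as $\hat{\phi}_\theta = (\hat{\phi}_0)_\theta$ and that the Connes interior product $i_{\hat{D}}$ is compatible with the deformation, so that the $\theta$-deformation of $\#_{\alpha^0}\phi$ does recover $\#_{\alpha_\theta}\phi$ up to cohomology.

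Granting this, I would apply Theorem~\ref{thm:theta-deform-inv-cocycle-deform} to the invariant cocycle $\#_{\alpha^0}\phi$, taking $D_1 = D_\alpha$ and $D_2 = \hat{D}$. This yields that
\[
\pairing{\#_{\alpha_\theta}\phi + \theta\,(i_{D_\alpha} i_{\hat{D}} - i_{\hat{D}} i_{D_\alpha})\,\#_{\alpha_\theta}\phi}{c(\theta)}
\]
is constant in $\theta$. It therefore suffices to prove that the correction term pairs trivially with $c(\theta)$, i.e. that $(i_{D_\alpha} i_{\hat{D}} - i_{\hat{D}} i_{D_\alpha})\,\#_{\alpha_\theta}\phi$ is cohomologically trivial for every $\theta$.

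The key point is that $\#_{\alpha_\theta}\phi = i_{\hat{D}} \hat{\phi}_\theta$ already carries one factor of $i_{\hat{D}}$. Writing out the correction, the first summand is $i_{D_\alpha} i_{\hat{D}}^2 \hat{\phi}_\theta$, which vanishes in cohomology because $i_{\hat{D}}^2 \hat{\phi}_\theta$ is trivial by Proposition~\ref{prop:i-D-sq-zero} (the dual cocycle being $\hat{D}$-invariant) and $i_{D_\alpha}$ descends to cohomology. For the second summand I would use that $D_\alpha$ and $\hat{D}$ commute as derivations, so that $i_{D_\alpha}\hat{\phi}_\theta$ is again $\hat{D}$-invariant, together with the anticommutativity of $i_{D_\alpha}$ and $i_{\hat{D}}$ on invariant classes established inside the proof of Theorem~\ref{thm:theta-deform-inv-cocycle-deform}; these give
\[
i_{\hat{D}} i_{D_\alpha} i_{\hat{D}} \hat{\phi}_\theta \equiv - i_{\hat{D}}^2\, i_{D_\alpha} \hat{\phi}_\theta \equiv 0.
\]
Hence both summands of the correction are cohomologically trivial, and the pairing $\pairing{\#_{\alpha_\theta}\phi}{c(\theta)}$ is independent of $\theta$.

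The main obstacle I anticipate is the naturality step of the first paragraph: in the Fr\'{e}chet and oscillatory-integral setting one must check that the dual-cocycle construction and the interior product $i_{\hat{D}}$ are genuinely compatible with the $\theta$-deformation, so that $\#_{\alpha_\theta}\phi$ really is the deformation of the single invariant cocycle $\#_{\alpha^0}\phi$. A secondary point requiring care is the bookkeeping of invariance: the operators $i_{D_\alpha}$ and $i_{\hat{D}}$ are only defined on invariant classes, so before invoking the square-zero relation of Proposition~\ref{prop:i-D-sq-zero} and the anticommutation relation at each step, one must confirm that the intermediate cochain (for instance $i_{D_\alpha}\hat{\phi}_\theta$) is cohomologous to an invariant one.
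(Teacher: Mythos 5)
Your proposal is correct and takes essentially the same route as the paper: the paper's proof likewise applies Theorem~\ref{thm:theta-deform-inv-cocycle-deform} to the crossed products viewed as the $\theta$-deformation of $\smooth{A} \rtimes_{\triv} \R$ along $D_\alpha$ and $\hat{D}$, and then kills the correction term by noting that $\#_{\alpha^s}\phi = i_{\hat{D}}\hat{\phi}$ already carries a factor of $i_{\hat{D}}$, so Proposition~\ref{prop:i-D-sq-zero} renders $i_{D_\alpha} i_{\hat{D}} \#_{\alpha^s}\phi$ cohomologically trivial. Your additional verifications (naturality of the dual-cocycle construction under deformation, invariance bookkeeping before invoking the square-zero and anticommutation relations) merely make explicit details the paper leaves implicit.
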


\begin{proof}
By Theorem~\ref{thm:theta-deform-inv-cocycle-deform}, we know that $\#_{\alpha^s} \phi + \theta i_{D_\alpha} i_{\hat{D}} \#_{\alpha^s} \phi$ is the monodromy of the Gauss--Manin connection.  By Proposition~\ref{prop:i-D-sq-zero}, the class of $i_{D_\alpha} i_{\hat{D}} \#_{\alpha^s} \phi$ is trivial.
\end{proof}

% \bibliography{mybibliography}
% \bib, bibdiv, biblist are defined by the amsrefs package.
\begin{bibdiv}
\begin{biblist}

\bib{arXiv:1207.2560}{misc}{
      author={Bhowmick, Jyotishman},
      author={Neshveyev, Sergey},
      author={Sangha, Amandip},
       title={Deformation of operator algebras by {B}orel cocycles},
         how={preprint},
        date={2012},
      eprint={\href{http://arxiv.org/abs/1207.2560}{{\tt arXiv:1207.2560
  [math.OA]}}},
}

\bib{MR0077480}{book}{
      author={Cartan, Henri},
      author={Eilenberg, Samuel},
       title={Homological algebra},
   publisher={Princeton University Press},
     address={Princeton, N. J.},
        date={1956},
      review={\MR{0077480 (17,1040e)}},
}

\bib{arXiv:0906.3122}{misc}{
      author={Cattaneo, Alberto~S.},
      author={Felder, Giovanni},
      author={Willwacher, Thomas},
       title={The character map in deformation quantization},
         how={preprint},
        date={2009},
      eprint={\href{http://arxiv.org/abs/0906.3122}{{\tt arXiv:0906.3122
  [math.QA]}}},
}

\bib{MR1066176}{article}{
      author={Connes, Alain},
      author={Moscovici, Henri},
       title={Cyclic cohomology, the {N}ovikov conjecture and hyperbolic
  groups},
        date={1990},
        ISSN={0040-9383},
     journal={Topology},
      volume={29},
      number={3},
       pages={345\ndash 388},
      review={\MR{MR1066176 (92a:58137)}},
}

\bib{MR823176}{article}{
      author={Connes, Alain},
       title={Noncommutative differential geometry},
        date={1985},
        ISSN={0073-8301},
     journal={Inst. Hautes \'Etudes Sci. Publ. Math.},
      volume={62},
       pages={257\ndash 360},
      review={\MR{MR823176 (87i:58162)}},
}

\bib{MR2762543}{incollection}{
      author={Dolgushev, V.~A.},
      author={Tamarkin, D.~E.},
      author={Tsygan, B.~L.},
       title={Noncommutative calculus and the {G}auss-{M}anin connection},
        date={2011},
   booktitle={Higher structures in geometry and physics},
      series={Progr. Math.},
      volume={287},
   publisher={Birkh\"auser/Springer, New York},
       pages={139\ndash 158},
      review={\MR{2762543 (2011h:19001)}},
}

\bib{MR731772}{incollection}{
      author={Elliott, G.~A.},
       title={On the {$K$}-theory of the {$C\sp{\ast} $}-algebra generated by a
  projective representation of a torsion-free discrete abelian group},
        date={1984},
   booktitle={Operator algebras and group representations, {V}ol. {I}
  ({N}eptun, 1980)},
      series={Monogr. Stud. Math.},
      volume={17},
   publisher={Pitman},
     address={Boston, MA},
       pages={157\ndash 184},
      review={\MR{731772 (85m:46067)}},
}

\bib{MR945014}{article}{
      author={Elliott, George~A.},
      author={Natsume, Toshikazu},
      author={Nest, Ryszard},
       title={Cyclic cohomology for one-parameter smooth crossed products},
        date={1988},
        ISSN={0001-5962},
     journal={Acta Math.},
      volume={160},
      number={3-4},
       pages={285\ndash 305},
         url={http://dx.doi.org/10.1007/BF02392278},
         doi={10.1007/BF02392278},
      review={\MR{MR945014 (89h:46093)}},
}

\bib{MR1261901}{incollection}{
      author={Getzler, Ezra},
       title={Cartan homotopy formulas and the {G}auss-{M}anin connection in
  cyclic homology},
        date={1993},
   booktitle={Quantum deformations of algebras and their representations
  ({R}amat-{G}an, 1991/1992; {R}ehovot, 1991/1992)},
      series={Israel Math. Conf. Proc.},
      volume={7},
   publisher={Bar-Ilan Univ.},
     address={Ramat Gan},
       pages={65\ndash 78},
      review={\MR{1261901 (95c:19002)}},
}

\bib{arXiv:1011.2735}{misc}{
      author={Hassanzadeh, Mohammad},
      author={Khalkhali, Masoud},
       title={Cup coproducts in {H}opf-cyclic cohomology},
         how={preprint},
        date={2010},
      eprint={\href{http://arxiv.org/abs/1011.2735}{{\tt arXiv:1011.2735
  [math.KT]}}},
}

\bib{MR732839}{article}{
      author={Karoubi, Max},
       title={Homologie cyclique des groupes et des alg{\`e}bres},
        date={1983},
        ISSN={0249-6291},
     journal={C. R. Acad. Sci. Paris S{\'e}r. I Math.},
      volume={297},
      number={7},
       pages={381\ndash 384},
      review={\MR{732839 (85g:18012)}},
}

\bib{MR913964}{article}{
      author={Karoubi, Max},
       title={Homologie cyclique et {$K$}-th\'eorie},
        date={1987},
        ISSN={0303-1179},
     journal={Ast\'erisque},
      number={149},
       pages={147},
      review={\MR{913964 (89c:18019)}},
}

\bib{MR2444365}{incollection}{
      author={Kontsevich, Maxim},
       title={X{I} {S}olomon {L}efschetz {M}emorial {L}ecture series: {H}odge
  structures in non-commutative geometry},
        date={2008},
   booktitle={Non-commutative geometry in mathematics and physics},
      series={Contemp. Math.},
      volume={462},
   publisher={Amer. Math. Soc.},
     address={Providence, RI},
       pages={1\ndash 21},
        note={Notes by Ernesto Lupercio},
      review={\MR{2444365 (2009m:53236)}},
}

\bib{MR2112033}{article}{
      author={Khalkhali, Masoud},
      author={Rangipour, Bahram},
       title={Cup products in {H}opf-cyclic cohomology},
        date={2005},
        ISSN={1631-073X},
     journal={C. R. Math. Acad. Sci. Paris},
      volume={340},
      number={1},
       pages={9\ndash 14},
         url={http://dx.doi.org/10.1016/j.crma.2004.10.025},
         doi={10.1016/j.crma.2004.10.025},
      review={\MR{2112033 (2005h:16013)}},
}

\bib{MR0413089}{article}{
      author={Kan, D.~M.},
      author={Thurston, W.~P.},
       title={Every connected space has the homology of a {$K(\pi ,1)$}},
        date={1976},
        ISSN={0040-9383},
     journal={Topology},
      volume={15},
      number={3},
       pages={253\ndash 258},
      review={\MR{0413089 (54 \#1210)}},
}

\bib{MR1600246}{book}{
      author={Loday, Jean-Louis},
       title={Cyclic homology},
     edition={Second},
      series={Grundlehren der Mathematischen Wissenschaften [Fundamental
  Principles of Mathematical Sciences]},
   publisher={Springer-Verlag},
     address={Berlin},
        date={1998},
      volume={301},
        ISBN={3-540-63074-0},
        note={Appendix E by Mar{\'{\i}}a O. Ronco, Chapter 13 by the author in
  collaboration with Teimuraz Pirashvili},
      review={\MR{1600246 (98h:16014)}},
}

\bib{MR2218025}{incollection}{
      author={Mathai, Varghese},
       title={Heat kernels and the range of the trace on completions of twisted
  group algebras},
        date={2006},
   booktitle={The ubiquitous heat kernel},
      series={Contemp. Math.},
      volume={398},
   publisher={Amer. Math. Soc.},
     address={Providence, RI},
       pages={321\ndash 345},
        note={With an appendix by Indira Chatterji},
      review={\MR{2218025 (2007c:58034)}},
}

\bib{MR1350407}{article}{
      author={Nest, Ryszard},
      author={Tsygan, Boris},
       title={Algebraic index theorem},
        date={1995},
        ISSN={0010-3616},
     journal={Comm. Math. Phys.},
      volume={172},
      number={2},
       pages={223\ndash 262},
         url={http://projecteuclid.org/getRecord?id=euclid.cmp/1104274104},
      review={\MR{1350407 (96j:58163b)}},
}

\bib{MR1667686}{incollection}{
      author={Nest, Ryszard},
      author={Tsygan, Boris},
       title={On the cohomology ring of an algebra},
        date={1999},
   booktitle={Advances in geometry},
      series={Progr. Math.},
      volume={172},
   publisher={Birkh\"auser Boston},
     address={Boston, MA},
       pages={337\ndash 370},
      review={\MR{1667686 (99k:16018)}},
}

\bib{MR595412}{article}{
      author={Pimsner, M.},
      author={Voiculescu, D.},
       title={Imbedding the irrational rotation {$C\sp{\ast} $}-algebra into an
  {AF}-algebra},
        date={1980},
        ISSN={0379-4024},
     journal={J. Operator Theory},
      volume={4},
      number={2},
       pages={201\ndash 210},
      review={\MR{595412 (82d:46086)}},
}

\bib{MR2475613}{article}{
      author={Rangipour, Bahram},
       title={Cup products in {H}opf cyclic cohomology via cyclic modules},
        date={2008},
        ISSN={1532-0073},
     journal={Homology, Homotopy Appl.},
      volume={10},
      number={2},
       pages={273\ndash 286},
         url={http://projecteuclid.org/getRecord?id=euclid.hha/1251811077},
      review={\MR{2475613 (2010d:16007)}},
}

\bib{rieffel-irr-rot-pres}{misc}{
      author={Rieffel, Marc~A.},
       title={Irrational rotation {C$^*$}-algebras},
         how={short communication},
        date={1978},
        note={presented at International Congress of Mathematicians, Helsinki},
}

\bib{MR1002830}{article}{
      author={Rieffel, Marc~A.},
       title={Deformation quantization of {H}eisenberg manifolds},
        date={1989},
        ISSN={0010-3616},
     journal={Comm. Math. Phys.},
      volume={122},
      number={4},
       pages={531\ndash 562},
         url={http://projecteuclid.org/getRecord?id=euclid.cmp/1104178588},
      review={\MR{1002830 (90e:46060)}},
}

\bib{MR0154906}{article}{
      author={Rinehart, George~S.},
       title={Differential forms on general commutative algebras},
        date={1963},
        ISSN={0002-9947},
     journal={Trans. Amer. Math. Soc.},
      volume={108},
       pages={195\ndash 222},
      review={\MR{0154906 (27 \#4850)}},
}

\bib{MR2308582}{article}{
      author={Tsygan, Boris},
       title={On the {G}auss-{M}anin connection in cyclic homology},
        date={2007},
        ISSN={1029-3531},
     journal={Methods Funct. Anal. Topology},
      volume={13},
      number={1},
       pages={83\ndash 94},
      review={\MR{2308582 (2008b:16011)}},
}

\bib{MR2738561}{article}{
      author={Yamashita, Makoto},
       title={Connes-{L}andi deformation of spectral triples},
        date={2010},
        ISSN={0377-9017},
     journal={Lett. Math. Phys.},
      volume={94},
      number={3},
       pages={263\ndash 291},
      eprint={\href{http://arxiv.org/abs/1006.4420}{{\tt arXiv:1006.4420}}},
      review={\MR{2738561}},
}

\bib{arXiv:1107.2512}{misc}{
      author={Yamashita, Makoto},
       title={Deformation of algebras associated to group cocycles},
         how={preprint},
        date={2011},
      eprint={\href{http://arxiv.org/abs/1107.2512}{{\tt arXiv:1107.2512
  [math.OA]}}},
}

\end{biblist}
\end{bibdiv}

\end{document}